\documentclass[12pt,twoside]{amsart}
\usepackage{latexsym,amsmath,amsopn,amssymb,amsthm,amsfonts}
\usepackage[T2A]{fontenc}
\usepackage[cp1251]{inputenc}
\usepackage{graphicx}

\textwidth=15cm \oddsidemargin=0.5cm \topmargin=-1cm
\textheight=22cm \evensidemargin=0.5cm

\newtheorem{theorem}{Theorem}

\newtheorem{proposition}{Proposition}

\newtheorem{remark}{Remark}

\newtheorem{definition}{Definition}

\newtheorem{corollary}{Corollary}

\newtheorem{lemma}{Lemma}

\newcommand{\Ad}{\operatorname{Ad}}

\newcommand{\ad}{\operatorname{ad}}

\newcommand{\Ric}{\operatorname{Ric}}

\newcommand{\CD}{\operatorname{CD}}

\newcommand{\ess}{\operatorname{ess}}

\newcommand{\End}{\operatorname{End}}

\newcommand{\Nul}{\operatorname{Nul}}

\begin{document}

\title[Curvatures of homogeneous sub-Riemannian manifolds]
{On curvatures of homogeneous\\ sub-Riemannian manifolds}
\author{V.\,N.\,Berestovskii}
\thanks{The publication was supported by the Ministry of Education and Science of the Russian Federation (the Project number 1.3087.2017/PCh)}
\address{V.N.~Berestovskii}
\address{Sobolev Institute of Mathematics SB RAS, \newline 4 Akad. Koptyug avenue, 630090, Novosibirsk, Russia}
\address{Novosibirsk State University, \newline 2 Pirogov str., 630090, Novosibirsk, Russia}
\email{vberestov@inbox.ru}
\maketitle
\maketitle {\small
\begin{quote}
\noindent{\sc Abstract.}
The author proved in the late 1980s that any homogeneous manifold with an intrinsic 
metric is isometric to some homogeneous quotient space of a connected Lie group 
by its compact subgroup with an invariant Finslerian or sub-Finslerian metric. In 
a case of trivial compact subgroup, invariant Riemannian or sub-Riemannian metrics are 
singled out from invariant Finslerian or sub-Finslerian metrics by their one-to-one 
correspondence with special one-parameter Gaussian convolutions semigroups of 
absolutely continuous probability measures. Any such semigroup is generated by a
second order hypoelliptic operator. In connection with this, the author discusses
briefly the operator definition of Ricci lower bounds for sub-Riemannian manifolds 
by Baudoin-Garofalo. Earlier, Agrachev 
defined a notion of curvature for sub-Riemannian manifolds. As an alternative,
the author discusses in some detail the old definitions of the curvature tensors 
for rigged metrized distributions on manifolds given by Schouten, Wagner, and Solov'ev. 
To calculate the Solov'ev sectional and Ricci curvatures for homogeneous sub-Riemannian 
manifolds, the author suggests to use in some cases special riggings of invariant 
completely nonholonomic distributions on manifolds. As a justification, we find a 
foliation on the cotangent bundle $T^{\ast}G$ over a Lie group $G$ whose leaves are 
tangent to invariant Hamiltonian vector fields for the Pontryagin-Hamilton function. This 
function was applied in the Pontryagin maximum principle for the time-optimal problem. The 
foliation is entirely described by the co-adjoint representation of the Lie group $G.$ 
Also we use the canonical symplectic form on $T^{\ast}G$ and its values for the 
above mentioned invariant Hamiltonian vector fields. In particular, the above rigging method 
is applicable to contact sub-Riemannian manifolds, sub-Riemannian Carnot groups, 
and homogeneous sub-Riemannian manifolds possessing a submetry onto a Riemannian 
manifold. At the end, some examples are presented. 
\end{quote}}
{\small
\begin{quote}
\noindent{\textit{Key words and phrases:}} co-adjoint representation, contact 
form, cotangent bundle, Hamiltonian vector field, homogeneous sub-Riemannian manifold, 
left-invariant sub-Riemannian metric, Lie algebra, Lie group, Pontryagin-Hamilton function,
submetry, sub-Riemannian curvature, symplectic form.
\end{quote}}
{\small
\begin{quote}
\noindent{\textit{2010 Mathematics Subject Classification.}} Primary: 53C17, 58B20.\\
Secondary: 53C21, 53D05, 53D10.
\end{quote}}

\section*{Introduction}

With the help of the results by Iwasawa-Gleason-Yamabe on the structure of connected
locally compact topological groups, the author proved in the late 1980s
that every locally compact homogeneous space with intrinsic metric is a projective
limit of a sequence of homogeneous manifolds with an intrinsic metric \cite{Ber88}, 
\cite{Ber89}. In turn, any homogeneous manifold with an intrinsic metric is
isometric to some homogeneous quotient space $G/H$ of a connected Lie group 
$G$ by its compact subgroup $H$ with $G$-invariant {\it Finslerian} or 
{\it sub-Finslerian metric} $d$ \cite{Ber88.1}, \cite{Ber89.1}. The metric
$d$ is defined by some $G$-invariant completely nonholonomic (vector) 
distribution $D$ on $G/H$ and norm $F$ on $D.$ In the Finsler case $D=T(G/H).$
The distance $d(x,y)$ between any points $x,y\in G/H$ is equal to the infimum
of lengths of piece-wise smooth paths tangent to $D$ and joining these points.
By definition, the length of any such path $\gamma=\gamma(t),$ $a\leq t\leq b$ is 
equal to the integral $\int_a^bF(\stackrel{\cdot}{\gamma}(t))dt.$   

If $F$ is equal to the square root of scalar square with respect to the inner product 
$\langle\cdot,\cdot\rangle,$ then $d$ is a {\it Riemannian} or 
{\it sub-Riemannian metric}. For any $G$-invariant 
(sub-)Finslerian (respectively, (sub-)Riemannian) metric $d$ 
on $G/H$ there exists a $G$-left-invariant and $H$-right-invariant  
(sub-)Finslerian (respectively, (sub-)Riemannian) metric $\delta$ on $G$ 
such that the canonical projection $p: (G,\delta)\rightarrow (G/H,d)$ is a 
{\it submetry} \cite{BG}. In the Riemannian case this submetry is a Riemannian
submersion. 

In \cite{Ber88.1} the case $H=\{e\}$ is considered; then $d$ is a left-invariant
Finslerian or sub-Finslerian (or more special Riemannian or sub-Riemannian) 
metric on the Lie group $G$. In this case the smallest Lie algebra, containing
the vector subspace $D(e)$ of the Lie algebra $\mathfrak{g}=(T_eG,[\cdot,\cdot])$ 
of the Lie group $G,$ coincides with $\mathfrak{g},$ and $D$ is a left-invariant
vector subbundle of the tangent bundle $TG.$ As a consequence of the left-invariance
of $D$ and norm $F$, it is enough to assign $D(e)$ and a value $F$ on $D(e).$ 

These results together with {\it the Pontryagin maximum principle} for the
corresponding left-invariant {\it time-optimal problems} \cite{PBGM}, 
\cite{Ber88} on Lie groups permit in many cases to find (locally) shortest arcs
of homogeneous intrinsic metrics on manifolds.

It is difficult to study general homogeneous sub-Finslerian
manifolds and there are a few works on them. One can mention
papers by Berestovskii \cite{Ber94} and G.A.~Noskov \cite{Nos}; there are
found {\it geodesics}, i.e. {\it locally shortest (curves)}, and 
{\it shortest arcs} of arbitrary left-invariant sub-Finslerian metrics on
three-dimensional Heisenberg group.

Recently, A.A.~Agrachev defined a curvature of sub-Riemannian manifolds \cite{ABR}.
For this he applied a thorough, natural, justified, and universal approach. 
However, in the general case, at least now, there is no available formula to 
calculate this curvature by the Agrachev method. It is possible to do this for 
contact sub-Riemannian manifolds \cite{AgrLee}, \cite{ABR1}.

On the other hand, more than thirty years ago, my former colleague 
at Omsk State University A.F.~Solov'ev defined and suggested how to 
calculate easily the sectional and Ricci curvatures of any rigged and
metrized distribution $(D,\langle\cdot,\cdot\rangle)$ in manifolds.

In order to apply the Solov'ev method to the case of homogeneous sub-Riemannian 
manifolds, it is necessary to solve only one (generally difficult) problem, 
namely, to find a justified  invariant rigging $D^{\perp}$ of $D$, i.e. a
complementary distribution in $G/H$. 

We show that it is possible to apply the Solov'ev method in the 
following cases: 

1) for any smooth contact sub-Riemannian manifold,

2) for any three-dimensional Lie group with left-invariant 
sub-Riemannian metric, 

3) when there is a submetry from $(G/H,d)$ onto a Riemannian manifold, 

4) for sub-Riemannian Carnot groups,  

5) for sub-Riemannian $(G,d)$ when there is a unique rigging $D^{\perp}$ of $D$ 
such that $D^{\perp}(e)$ is a Lie subalgebra $\mathfrak{k}$ of Lie algebra 
of the Lie group $G$ and $[\mathfrak{k},D(e)]\subset D(e).$

{\it It is possible to show that for any homogeneous sub-Riemannian manifold} $(M,d)$
{\it there is a connected Lie group} $G$ {\it with a left-invariant sub-Riemannian metric} 
$\delta$ {\it such that there is a submetry from} $(G,\delta)$ {\it onto} $(M,d)$ 
{\it and, moreover, the problem of calculation of Solov'ev curvatures for} $(M,d)$ {\it 
is fully reduced to the case} of $(G,\delta)$. 

To justify the cases of the sub-Riemannian Lie group $G$, we shall find a special 
foliation on the cotangent bundle $T^{\ast}G$. Its leaves are tangent to invariant 
Hamiltonian vector fields for the Pontryagin-Hamilton function, applied in 
the Pontryagin maximum principle for the time-optimal problem. This foliation is 
transversal to the fibres of the canonical projection from $T^{\ast}G$ onto $G$. 
This projection maps any leaf of the foliation onto all $G.$ If a leaf contains 
covectors $\xi_0$ and $\xi_1$ over $e$ and $g\in G$ respectively, then 
$\xi_0=\Ad^{\ast}g(\xi_1).$ These properties entirely characterize
the foliation. Also we use the canonical symplectic structure on $T^{\ast}G$ which 
is really tightly connected with another well-known canonical
symplectic structure on orbits of the co-adjoint representation $\Ad^{\ast}$ of 
the Lie group $G.$ Notice that these considerations do not depend on a choice of 
a left-invariant Riemannian or sub-Riemannian metric on $G.$  

In the last chapter of the paper we shall consider some examples. It is interesting that
every Hopf bundle presents a particular case of situation 3) above.

The author thinks that applications of the Solov'ev method to sub-Riemannian manifolds
deserve attention because there appeared different notions of curvatures for 
these manifolds. Therefore it is useful to compare these notions and single out
the ``correct and applicable'' one between them. 

In connection with this, it is appropriate to give the following extensive quotation from
the end of the Introduction to the paper \cite{BaudGar} by F.~Baudoin and N.~Garofalo: 
``For general metric measure spaces, a different notion of lower bounds on the 
Ricci tensor based on the theory of optimal'' (Kantorovich-Monge mass) ``transportation
has been recently proposed independently by Sturm \cite{Sturm1}, \cite{Sturm2} and
by Lott-Villani \cite{LV} (see also \cite{Olliv}). However, as pointed out by Juillet
\cite{Jul}, the remarkable theory developed in those papers does not appear to be
suited for sub-Riemannian manifolds. For instance, in that theory the flat Heisenberg
group $\mathbb{H}^1$ has curvature $-\infty$. \dots An analysis shows that, interestingly,
our notion of the Ricci tensor, coincides, up to a scaling factor, with'' one given in 
\cite{ABR}, \cite{AgrLee}, \cite{ABR1}.      

\section{Preliminaries}

The following statements are true \cite{BP}: 

1) A locally compact homogeneous space with an intrinsic metric $(M,d)$ 
is isometric to a homogeneous Riemannian manifold of sectional curvature $\leq K$
for a number $K\in \mathbb{R}$ if and only if $(M,d)$ has Alexandrov 
curvature $\leq K$; 

2) there exist infinite-dimensional locally compact homogeneous spaces with
an intrinsic metric of Alexandrov curvature $\geq K > -\infty$; 

3) a finite-dimensional locally compact homogeneous space with intrinsic
metric $(M,d)$ is isometric to a homogeneous Riemannian manifold of sectional
curvature $\geq K$ for some number $K$ if and only if $(M,d)$ has Alexandrov 
curvature $\geq K$; 

4) if a locally compact homogeneous space with intrinsic metric $(M,d)$ has
curvature $\geq K >0$, then $(M,d)$ is isometric to some homogeneous Riemannian 
manifold with sectional curvature $\geq K$.

On the other hand, the author does not know any {\it natural geometric characterization
of sub-Riemannian metrics in the class of homogeneous sub-Finslerian metrics}. Possibly, 
there is no such characterization. 

Contemporary methods of probability theory, functional analysis, and partial
differential equations permit to set a one-to-one correspondence between Riemannian
or sub-Riemannian metrics on any given Lie group $G$ and {\it symmetric} in the
sense of H.~Heyer \cite{He} and E.~Siebert \cite{Sieb82} {\it one-parameter convolution 
Gaussian semigroups of absolutely continuous} (with respect to a left-invariant Haar 
measure $\omega$ on the group $G$) {\it probability measures} $\mu_t=u_t\omega$ 
with densities $u_t,$ $t>0$ \cite{He}, \cite{Sieb82}. Moreover, the function 
$u:\mathbb{R}_+\times G\rightarrow \mathbb{R}_+,$ $u(t,g)=u_t(g),$  
is a smooth (i.e. infinitely differentiable) solution of a linear hypoelliptic 
parabolic homogeneous partial differential equation $(\partial/\partial t-L)u=0$ 
similar to the heat equation \cite{Sieb82}. Here $L=\sum_{i=1}^{m}X_i^2,$ 
where $X_1,\dots ,X_m$ are left-invariant vector fields on $G$ generating the Lie
algebra $\mathfrak{g}$ \cite{He}, \cite{Sieb82}. Therefore $L$ is a left-invariant
linear {\it hypoelliptic operator} in the sense of H\"ormander \cite{Hor1}. 
The operator $L$ naturally corresponds to the left-invariant (sub-)Riemannian metric 
$d$ on $G$ defined by a distribution $D$ with orthonormal basis $X_1,\dots ,X_m.$ 
Conversely, let a left-invariant (sub-)Riemannian metric $d$ on $G$ be defined
by a distribution $D$ with orthonormal basis $X_1,\dots ,X_m.$ Then the operator
$L=\sum_{i=1}^{m}X_i^2$ assigns a unique smooth solution 
$u=u(t,\cdot),$ $t>0,$ of the differential equation $(\partial/\partial t-L)u=0$ 
such that $\mu_t=u_t\omega$ is a Gaussian convolution semigroup of absolutely
continuous probability measures on $G.$ 

One can easily see that $L 1 =0,$ $L$ is a symmetric and non-positive operator 
relative to $\omega$, i.e. for any $f,g\in C_0^{\infty}(G),$ 
$$\int_G fLg d\omega=\int_G gLf d\omega,\quad \int_G fLf d\omega \leq 0.$$

\section{On the operator definitions of curvatures}

In the paper \cite{BaudGar}, F.~Baudoin and N.~Garofalo 
introduced a generalized curvature-dimension inequality. We shall apply 
(only) definitions of this work to the case we are interested in, namely,
the Lie group $G=G^n$ with a left-invariant sub-Riemannian metric $d$ and 
a hypoelliptic operator $L=\sum_{i=1}^m X_i^2,$ $2\leq m < n.$ 

They associate with such $L$ the following symmetric differential bilinear form 
\begin{equation}
\label{BaG}
\Gamma(f,g)=\frac{1}{2}\{L(fg)-fLg - gLf\}.
\end{equation}
The expression $\Gamma(f):=\Gamma(f,f)=\sum_{i=1}^k(X_if)^2$ is 
{\it le carr\'e du champ} \cite{BaudGar} since  
$$d(x,y)=\sup \{|f(x)-f(y)|f\in C^{\infty}(G),\quad 
\|\Gamma(f)\|_{\infty}\leq 1\},$$
where $\|f\|_{\infty}=\ess \sup_G|f|$ for a smooth function $f$ on $G$.

In addition, in \cite{BaudGar} is given some symmetric bilinear differential
form of the first order 
$\Gamma^Z: C^{\infty}(G)\times C^{\infty}(G)\rightarrow \mathbb{R},$
such that 
$$\Gamma^Z(fg,h)=f\Gamma^Z(g,h)+g\Gamma^Z(f,h),\quad 
\Gamma^Z(f):=\Gamma^Z(f,f)\geq 0,\quad\Gamma^Z(1)=0.$$ 
In the case of a Lie group, it is natural to define it in the following manner. 
Let us assume that there are chosen a left-invariant {\it rigging}  of distribution $D$, i.e. 
a left-invariant distribution $D^{\perp}$ on $G$, complementary to $D$ such that
$D\oplus D^{\perp}=TG,$ a left-invariant scalar product $(\cdot,\cdot)$ on $TG$ such that 
$(\cdot,\cdot)_D=\langle\cdot,\cdot\rangle$ and $(D,D^{\perp})=0,$ 
and also a left-invariant basis of vector fields $Z_{1},\dots, Z_{l}$ in $D^{\perp}$, 
orthonormal relative to $(\cdot,\cdot)_{D^{\perp}}$, so that $m+l=n$. Notice that the
English term ``rigging'' was suggested by V.V.~Wagner in \cite{Vag}. We define 
$\Gamma^Z(f,g):=\sum_{j=1}^l Z_jf\cdot Z_jg.$ Then in \cite{BaudGar}
are defined  second order differential forms
\begin{equation}
\label{g2}
\Gamma_2(f,g)=\frac{1}{2}\{L\Gamma(f,g)-\Gamma(f,Lg)-\Gamma(g,Lf)\},
\end{equation}
\begin{equation}
\label{gz2}
\Gamma^Z_2(f,g)=\frac{1}{2}\{L\Gamma^Z(f,g)-\Gamma^Z(f,Lg)-\Gamma^Z(g,Lf)\}.
\end{equation}

\begin{definition}
\label{BDD}
It is said that in $G$ is satisfied a {\it generalized curvature-dimension inequality}
$\CD(\rho_1,\rho_2,\kappa,r)$ relative to $L$ and $\Gamma^Z$, if there exist constants
$\rho_1\in \mathbb{R},$ $\rho_2>0,$ $\kappa\geq 0,$ and $0<r\leq \infty$ such that
the inequality 
\begin{equation}
\label{bde}
\Gamma_2(f)+\nu\Gamma^Z_2(f)\geq \frac{1}{r}(Lf)^2 + 
\left(\rho_1-\frac{\kappa}{\nu}\right)
\Gamma(f)+ \rho_2\Gamma^Z(f)
\end{equation}     
is satisfied for all $f\in C^{\infty}(G)$ and every $\nu>0$.
\end{definition}

It should be pointed out that if in Definition \ref{BDD} we take 
$r=n=\dim(M),$ $L=\Delta,$ $\Gamma^Z\equiv 0,$ $\rho_1=\rho,$ 
$\kappa=0$ for a smooth Riemannian manifold $M$, then we obtain
the inequality $\CD(\rho,n)$ of {\it Bakry-Emery}. Bakry showed 
(see quotations in \cite{BaudGar}) that 
$\Ric (M)\geq \rho \Leftrightarrow \CD(\rho,n).$ Precisely this equivalence
served as the motivation for the work \cite{BaudGar} by Baudoin-Garofalo. 
The parameter $\rho_1$ plays the main role in the inequality (\ref{bde}) since 
in geometric examples, considered in \cite{BaudGar}, it represents the lower 
bound for the sub-Riemannian generalization Ricci curvature.

The article \cite{BaudGar} is based on (\ref{bde}) and the general Hypothesis 1,2,3.
Hypothesis 1 is equivalent to completeness of the metric $d$ which is satisfied
in our case. Hypothesis 2 is the following commutation relation:
\begin{equation}
\label{hyp2}
\Gamma(f,\Gamma^Z(f))=\Gamma^Z(f,\Gamma(f))\quad\mbox{for all}
\quad f\in C^{\infty}(G).
\end{equation} 
Hypothesis 3 has a technical character. It is enough to say that it is valid
for $G$ in consequence of the work \cite{Sieb84} E.~Siebert. 

\section{On definitions of curvatures for rigged metrized distributions}

In the papers \cite{SchVan} by Schouten and van Kampen and \cite{Vag} 
by Wagner the authors introduced and studied curvature tensors of
nonholonomic manifolds. It is not easy to read and understand these articles because
of the coordinate presentation of notions and results. In the paper \cite{Gorb} by
E.M.~Gorbatenko there was given a modern coordinate-free presentation of parts of these
papers which are interesting for us. We shall follow this presentation in the situation of
a homogeneous quotient manifold $M=G/H$ of a connected Lie group $G$ by
its compact subgroup $H$ with $G$-invariant completely nonholonomic
distribution $D$ and Riemannian metric $(\cdot,\cdot)$ on $M.$ 

Notice that we need the metric $(\cdot,\cdot)$ on all $M$ only in order to define below 
shortly a rigging $D^{\perp}$ of distribution $D.$

Below $\bf{T},$ $\bf{H}$ and $\bf{V}$ denote $C^{\infty}$-modules 
of vector fields on $M,$ tangent respectively to distributions $TM,$ 
$D$ and $D^{\perp}.$ Then $\bf{T}=\bf{H}\oplus\bf{V},$ i.e. any 
vector field $X\in \bf{T}$ is uniquely presented in a view  
$X=HX+VX,$ where $HX\in \bf{H},$ $VX\in \bf{V},$ and $(HX,VX)=0.$ Let
$\overline{\nabla}$ be the Levi-Civita connection of the Riemannian manifold 
$(M,(\cdot,\cdot))$ and  $\langle\cdot,\cdot\rangle=(\cdot,\cdot)_D.$  One can easily check that the formula
\begin{equation}
\label{nabla} 
\nabla_XY:=H\overline{\nabla}_XY,\quad X, Y\in \bf{H}
\end{equation}
defines a metric connection without torsion on $\bf{H}$, i.e. for $X,Y,Z\in \bf{H},$
$$X\langle Y,Z\rangle=\langle\nabla_XY,Z\rangle + \langle Y,\nabla_XZ\rangle,$$
$$T_{\nabla}(X,Y):=\nabla_XY-\nabla_YX-H[X,Y]=0.$$
Moreover $\nabla$ depends on $\langle\cdot,\cdot\rangle$ 
and the rigging $D^{\perp}$ but does not depend on $(\cdot,\cdot)_{D^{\perp}};$ 
$\nabla$ is the unique metric connection without torsion on $\bf{H}$ for
$\langle\cdot,\cdot\rangle$ and $D^{\perp}$ \cite{Gorb}.

{\it The Schouten tensor for nonholonomic manifold} 
$(D,\langle\cdot,\cdot\rangle)$ is an analogue of the curvature tensor for 
Riemannian manifolds and defined as follows
\begin{equation}
\label{scho}
K(X,Y)Z=\nabla_X\nabla_YZ-\nabla_Y\nabla_XZ-H[V[X,Y],Z],\quad X,Y,Z\in \bf{H}.
\end{equation}
Wagner wrote in \cite{Vag}: ``The Schouten tensor does not justify his title
``the curvature tensor'' already because on the ground of his properties 
one cannot judge on the curvature of nonholonomic manifold, i.e. on 
the absence of absolute parallelism'' (for the connection $\nabla$).

Before defining {\it the curvature tensor by Wagner} (or {\it Wagner-Schouten} 
as in \cite{Gorb}) one needs to introduce some mappings and corresponding notations. 

There exists a strongly increasing sequence of $C^{\infty}(M)$-modules 
$${\bf H}_0:={\bf H}, {\bf H}_i\subset {\bf H}_{i+1}:=
{\bf H}_i+[{\bf H}_i,{\bf H}_i],\dots , {\bf H}_r={\bf T}$$ 
of vector fields on the $M$ tangent to the corresponding distributions $D_0=D$, 
$D_i\subset D_{i+1}$, $D_r=TM$ on $M,$ $r$ is {\it the nonholonomy
order} of distribution $D$. Using the scalar product $(\cdot,\cdot)$ on $TM,$ 
we get decompositions
\begin{equation}
\label{dec} 
D_{i+1}=D_i\oplus \Theta_i,\quad \Theta_i:=
D_i^{\perp}\cap D_{i+1},\quad TM=\oplus_{i=0}^{r-1}\Theta_i \oplus D
\end{equation}
and a unique morphism of vector bundles   
$\theta_i: D_{i+1}/D_i \rightarrow \Theta_i\subset D^{i+1},$ 
$i=0,\dots, r-1,$ the right inverse to the canonical morphism 
$\pi_i: D_{i+1}\rightarrow D_{i+1}/D_i.$ There is also a surjective morphism 
$\delta_i: \Lambda^2 D_i\rightarrow D_{i+1}/D_i,$ prescribed by linear
combinations of mappings $X\wedge Y\rightarrow [X,Y]\mod D_i$ for $X,Y\in {\bf H}_i.$

Further, following \cite{Vag} and \cite{Gorb}, it is defined canonically a
new unique invariant Riemannian metric $\{\cdot,\cdot\}$ on $M,$ whose restriction
$\{\cdot,\cdot\}_{|D}=\langle\cdot,\cdot\rangle$ and the last decom\-position in
(\ref{dec}) is orthogonal. For this it is enough to assign $\{\cdot,\cdot\}_{|\Theta_i}$  
by induction on $i=0,\dots,r-1$. A scalar product $g$ on a vector space $V$ defines
non-degenerate linear mapping $g: V\rightarrow V^{\ast}$ such that  
$g(x,y)=g(x)(y)=g(y)(x),$ $x,y\in V,$ and it is defined by it. It is not difficult
to check that $g$ defines the scalar product
\begin{equation}
\label{scp}
g^{\wedge}: \Lambda^2V\rightarrow (\Lambda^2V)^{\ast}|
\quad g^{\wedge}=\phi\circ \Lambda^2 g,
\end{equation}
where $\phi: \Lambda^2V^{\ast}\rightarrow (\Lambda^2V)^{\ast}$ is the canonical isomorphism:
$$\phi(f\wedge h)(x\wedge y)=f(x)h(y)-f(y)h(x),\quad 
x,y \in V,\quad f,h\in V^{\ast}.$$
More explicitly,
$$(u_1\wedge v_1,u_2\wedge v_2)=(u_1,u_2)(v_1,v_2)-(u_1,v_2)(v_1,u_2).$$
By definition,
\begin{equation}
\label{sci}
\{\cdot,\cdot\}^{-1}_{|\Theta_i}:=\theta_i\circ\delta_i\circ
((\{\cdot,\cdot\}_{|D_i})^{\wedge})^{-1}\circ (\theta_i\circ\delta_i)^{\ast}.
\end{equation}
Let us define also a morphism of vector bundles
\begin{equation}
\label{morp}
\mu_i: D_{i+1}\rightarrow \Lambda^2D_i|\quad \mu_i = 
((\{\cdot,\cdot\}_{|D_i})^{\wedge})^{-1}\circ
\delta_i^{\ast}\circ \theta_i^{\ast}\circ \{\cdot,\cdot\}_{|\Theta_i}
\circ \theta_i\circ\pi_i.
\end{equation}
Let us denote by $\stackrel{i}{\nabla}$ a metric connection without torsion on  
$(D_i,\{\cdot,\cdot\}_{|D_i})$ and $H_i,$ $V_i$ are projections playing the same role
for $D_i$, $D_i^{\perp}$, as $H,$ $V$ for $D$, $D^{\perp}$. Now we are ready to
introduce {\it the Wagner covariant derivative}.  

Let us set $\stackrel{0}{K}=K$ (the Schouten tensor) and define 
$\stackrel{1}{\bigcirc}:{\bf H}_1\times {\bf H}_0\rightarrow {\bf H}_0$
by the condition
\begin{equation}
\label{one}
\stackrel{1}{\bigcirc}_XY = \stackrel{0}{\nabla}_{HX}Y+\stackrel{0}{K}
(\mu_0(X))(Y)+[VX,Y], \quad X\in {\bf H}_1,\quad Y\in {\bf H}_0
\end{equation}
and $\stackrel{1}{K}: \Lambda^2{\bf H}_1\rightarrow \End({\bf H}_0)$ 
for $X, Y\in {\bf H}_1$, $Z\in {\bf H}_0$ by condition
\begin{equation}
\label{kone}
\stackrel{1}{K}(X\wedge Y)(Z) = \stackrel{1}{\bigcirc}_
{[X}\stackrel{1}{\bigcirc}_{Y]}Z
- \stackrel{1}{\bigcirc}_{H_1[X,Y]}Z-H[V[X,Y],Z].
\end{equation}
Further, by induction,
$\stackrel{i+1}{\bigcirc}:{\bf H}_{i+1}\times {\bf H}_i\rightarrow {\bf H}_i,$
\begin{equation}
\label{one}
\stackrel{i+1}{\bigcirc}_XY = \stackrel{i}{\nabla}_{H_iX}Y+\stackrel{i}{K}(\mu_i(X))(Y)+[V_iX,Y];
\end{equation}
$\stackrel{i+1}{K}: \Lambda^2{\bf H}_{i+1}\rightarrow \End({\bf H}_i)$;
\begin{equation}
\label{kone}
\stackrel{i+1}{K}(X\wedge Y)(Z) = \stackrel{i+1}{\bigcirc}_
{[X}\stackrel{i+1}{\bigcirc}_{Y]}Z
- \stackrel{i+1}{\bigcirc}_{H_{i+1}[X,Y]}Z-H_i[V_i[X,Y],Z].
\end{equation}
Let us call $\stackrel{1}{\bigcirc},$ $\dots,$ $\stackrel{r-1}{\bigcirc}$ 
intermediate Wagner connections, $\stackrel{r}{\bigcirc}$ {\it the Wagner connection},
and the curvature tensor $\stackrel{r}{K}$ as {\it the Wagner curvature tensor of 
strongly rigged completely nonholonomic distribution} $D.$  
The distribution $D$ possesses absоlute parallelism with respect to $\nabla$
if and only if the Wagner curvature tensor of the distribution $D$ is equal
to zero \cite{Vag}, \cite{Gorb}.

Solov'ev introduced in the paper \cite{Sol84} the notion of a {\it curvature
tensor of distribution on the Riemannian manifold}. In particular, he obtained some special 
properties of the curvature of horizontal distribution of the Riemannian submersion
and left-invariant distributions on Lie groups with left-invariant Riemannian
metric.

He considers a Riemannian manifold $M$ with metric tensor $(\cdot,\cdot)$, 
its Levi-Civita connection $\nabla,$ smooth distribution $D,$ and
distribution $D^{\perp},$ orthogonal to $D$ relative to $(\cdot,\cdot)$; 
${\bf T},$ ${\bf H},$ ${\bf V}$ are corresponding $C^{\infty}(M)$-modules of
smooth vector fields on $M$, tangent to corresponding distributions  
$TM,$ $D,$ $D^{\perp};$ $H$, $V$ are projections from 
${\bf T}= {\bf H}\oplus {\bf V}$ to ${\bf H}$, ${\bf V}.$ 

{\it The induced connection} of distribution $D$ is  
$\overline{\nabla}_XY=H\nabla_XHY+V\nabla_XVY$, and its  
{\it second fundamental form} is the tensor field  
$h=\nabla - \overline{\nabla}$ \cite{Sol79}, \cite{Sol82}; $h^{+}$ and 
$h^{-}$ are symmetric and skew-symmetric parts of the field $h$ respectively.
It is proved in \cite{Sol82} that {\it the distribution $D$ on the Riemannian 
manifold} $(M,(\cdot,\cdot))$ {\it is totally geodesic (respectively involutive) 
if and only if} $h^{+}(HX,HY)=0$ {\it (respectively} 
$h^{-}(HX,HY)=0$) {\it for all} $X,Y\in {\bf T}$. If $T$ is the torsion tensor
for $\overline{\nabla}$ then
\begin{equation}
\label{tw}
T(X,Y)= -V[X,Y] = -2h^{-}(X,Y),\quad X,Y \in D.
\end{equation}

A diffeomorphism $f: M \rightarrow N$ of Riemannian manifolds is called a
$D$-{\it isometry} \cite{Sol79}, \cite{Sol82}, where $D$ is some 
distribution on $M$, if differential $df$ preserves lengths of vectors 
$v\in D$ and $df(D)\perp df(D^{\perp})$. In \cite{Sol79} it is proved 
\begin{theorem}
\label{isominv}
Every $D$-isometry ``preserves'' the expression of view  
$(\overline{\nabla}_XY,Z)-\frac{1}{2}(X,T(Y,Z))$ if 
$X\in {\bf T},$ $Y,Z\in {\bf H}.$
\end{theorem}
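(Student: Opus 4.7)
\emph{Proof proposal.} My plan is to rewrite the expression $(\overline{\nabla}_XY,Z)-\frac{1}{2}(X,T(Y,Z))$ by applying the Koszul identity to $(\overline{\nabla}_XY,Z)$ and then observing that the torsion correction exactly cancels the only Koszul term that is sensitive to the Riemannian metric off the distribution $D$. Since $Y,Z\in\mathbf{H}$, one has $\overline{\nabla}_XY=H\nabla_XY$ and therefore $(\overline{\nabla}_XY,Z)=(\nabla_XY,Z)$, so that Koszul gives
\begin{equation*}
2(\overline{\nabla}_XY,Z)=X(Y,Z)+Y(X,Z)-Z(X,Y)+([X,Y],Z)-([X,Z],Y)-([Y,Z],X).
\end{equation*}
Formula (\ref{tw}) yields $-(X,T(Y,Z))=(X,V[Y,Z])=(VX,V[Y,Z])$, and since $-([Y,Z],X)$ decomposes orthogonally as $-(H[Y,Z],HX)-(V[Y,Z],VX)$, the vertical piece is killed exactly.

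After this cancellation only two kinds of quantities remain in $2(\overline{\nabla}_XY,Z)-(X,T(Y,Z))$: three directional derivatives of functions of the form $(U,W)$ with $U,W$ horizontal (using $(X,Y)=(HX,Y)$ and $(X,Z)=(HX,Z)$ to horizontalise the relevant slot), and three inner products $(A,B)$ in which both $A$ and $B$ are horizontal vector fields built from $X,Y,Z$ by Lie brackets and the projection $H$. A $D$-isometry $f$ preserves the restricted metric $\langle\cdot,\cdot\rangle$ on $D$ by hypothesis; the condition $df(D)\perp df(D^{\perp})$ together with equality of dimensions forces $df(D^{\perp})$ to coincide with the orthogonal complement of $df(D)$, so $df$ intertwines the horizontal projections; and $f$ respects Lie brackets because it is a diffeomorphism. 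Consequently each building block of the reduced expression transforms covariantly, and the full sum is preserved.

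The step I expect to be the main obstacle is not the Koszul manipulation itself but the careful verification that an arbitrary (possibly vertical) vector field $X\in\mathbf{T}$ differentiates a $D$-metric function correctly under $f$. This reduces to the observation that the function in question depends only on the horizontal parts of its vector-field arguments and on $\langle\cdot,\cdot\rangle$, so it pulls back identically along $f$; the preservation of the directional derivative then follows by the chain rule, and the theorem is obtained by dividing the resulting identity by two.
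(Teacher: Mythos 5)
Your argument is correct and complete: the Koszul identity is stated correctly, the cancellation $-(V[Y,Z],VX)+(VX,V[Y,Z])=0$ coming from (\ref{tw}) is exactly right, and the surviving terms all reduce to $D$-inner products of horizontal fields (via $(X,Z)=(HX,Z)$ and $H[\cdot,\cdot]$), which a $D$-isometry preserves by polarization, the intertwining of the projections forced by $df(D)\perp df(D^{\perp})$, and naturality of the Lie bracket. Note that the paper itself gives no proof of this theorem --- it is quoted from Solov'ev's paper \cite{Sol79} --- so there is nothing to compare against, but your Koszul-based computation is the natural route and can stand as a self-contained proof.
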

Therefore in \cite{Sol84} Solov'ev defines on a Riemannian manifold $(M,(\cdot,\cdot))$ 
with distribution $D$ a new linear connection $С,$ setting
\begin{equation}
\label{ncon}
(C_XHY,Z)=(\overline{\nabla}_XHY,HZ)-(1/2)(X,T(HY,HZ))
\end{equation}
and $C_XVY$ arbitrary for any $X,Y,Z\in {\bf T}.$ We shall suppose that $C_XVY=0.$
Then $C_X({\bf V})=0,$ $C_X({\bf H})\subset {\bf H}$ and by (\ref{tw}), 
$$C_{HX}HY=\overline{\nabla}_{HX}HY=H\nabla_{HX}HY,\quad 
(C_{VX}HY,Z)=-(1/2)(VX,T(HY,HZ)).$$
By definition, {\it the curvature tensor} of the distribution $D$ is $K=\hat{K}\circ H,$ 
where $\hat{K}$ is the curvature tensor of the connection $C$. It is stated in \cite{Sol84} 
without proof that {\it this curvature tensor is the Schouten curvature tensor if $D$ 
is totally geodesic}. Let $\overline{R}$ be the curvature tensor of the connection 
$\overline{\nabla}.$ Then 
\begin{equation}
\label{ktone}
(K(X,Y)Z,W)=(\overline{R}(X,Y)Z,W)-(1/2)(T(X,Y),T(Z,W))
\end{equation}
for any $X,Y,Z,W\in {\bf H}$ and therefore for any such vector fields
\begin{eqnarray}
\label{kttwo}
(K(X,Y)Z,W)=(R(X,Y)Z,W)-2(h^{-}(X,Y),h^{-}(Z,W))+\nonumber\\ 
(h(X,W),h(Y,Z)) - (h(Y,W),h(X,Z)),
\end{eqnarray}
where $R$ is the curvature tensor of the Riemannian manifold $(M,(\cdot,\cdot)).$ The
equation (\ref{kttwo}) defines completely the value $K(HX,HY)HZ$ since $D$ is
parallel with respect to $C$ and therefore $K(X,Y)HZ\in {\bf H}$ for any $X,Y,Z\in {\bf T}.$
It may be considered as an analogue of the Gauss equation for a submanifold.

On the base of formula (\ref{ktone}) or (\ref{kttwo}) are given (completely analogous 
to the Riemannian case) definitions of {\it sectional} $K_{uv}$ and {\it Ricci} $k_w$ 
{\it curvatures} in the direction of two-dimensional subspace $\parallel u\wedge v\neq 0$ 
and one-dimensional subspace $\parallel w\neq 0$ for $u,v, w\in D(p)$ and 
{\it scalar curvature} $s$ at a point $p.$ The sectional curvature of a two-dimensional
distribution is called its {\it Gaussian curvature}. In consequence of the definitions,
these curvatures of the distribution $D$ are invariant relative to any $D$-isometry.

{\it The sectional torsion} for $0\neq u\wedge v \subset D(p)$ is defined in \cite{Sol79} 
by the equality $t_{uv}=\|T(u,v)\|^2/\|u\wedge v\|^2,$ where 
$\|u\wedge v\|^2=\|u\|^2\|v\|^2-(u,v)^2.$ Let $U$ be the domain of exponential mapping
$\overline{\exp}_p$ of the connection $\overline{\nabla}.$ The submanifold 
$E(p)=\overline{\exp}_p(U\cap D(p))$ is called {\it the osculation geodesic surface} 
\cite{Sol79} of the distribution $D$ at the point $p.$ In Theorem 1.3 from \cite{Sol84} 
is established the following geometric interpretation: 
{\it $K_{uv}=K^{(1)}_{uv}+(3/4)t_{uv}$, where $K^{(1)}$ is the sectional curvature
of the surface $E(p)\subset (M,(\cdot,\cdot))$.}

With the help of this interpretation, formula (\ref{tw}), known connection \cite{On} of
sectional curvatures in the total space and the base of Riemannian submersion, and the 
complete geodesic property of horizontal distribution of Riemannian submersion it is
established the following (Theorem 2.4 from \cite{Sol84})
\begin{theorem}
\label{subm}
Let $\pi: (M,(\cdot,\cdot))\rightarrow (B,\{\cdot,\cdot\})$ be a Riemannian submersion, 
$D$ and $D^{\perp}$ respectively its horizontal and vertical distributions on $M.$
Then for any non-collinear vectors $u,v\in D(p),$ $p\in M,$ 
$K_{uv}=K^B_{d\pi(u)d\pi(v)},$ where $K^B$ is the sectional curvature of the Riemannian manifold
$(B,\{\cdot,\cdot\}).$ 
\end{theorem}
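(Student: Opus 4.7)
The plan is to combine Theorem 1.3 from \cite{Sol84}, which gives the geometric interpretation $K_{uv} = K^{(1)}_{uv} + (3/4)\,t_{uv}$, with formula (\ref{tw}), the totally geodesic (``complete geodesic'') property of the horizontal distribution of a Riemannian submersion, and O'Neill's classical relation \cite{On} between the sectional curvatures of the total space and the base.

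First, I would verify that for a Riemannian submersion $\pi \colon (M,(\cdot,\cdot)) \to (B,\{\cdot,\cdot\})$ the horizontal distribution $D$ is totally geodesic in Solov'ev's sense, i.e.\ $h^{+}(HX,HY) = 0$ for all $X,Y \in \mathbf{T}$. Indeed, for horizontal $X,Y$ one has $h(X,Y) = \nabla_X Y - \overline{\nabla}_X Y = V\nabla_X Y$, which coincides with O'Neill's $A$-tensor $A_X Y$; this tensor is alternating on horizontal vectors, so its symmetric part $h^{+}$ vanishes. Equivalently, horizontal $M$-geodesics remain horizontal, whence every $\overline{\nabla}$-geodesic with horizontal initial velocity is a genuine $M$-geodesic. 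Therefore the osculation surface $E(p) = \overline{\exp}_p(U \cap D(p))$ is the ordinary Riemannian exponential image of a neighborhood of $0$ in $D(p)$ inside $M$, i.e.\ a ``geodesic umbrella'' with vertex $p$. Its second fundamental form vanishes at $p$ (every geodesic of $M$ through $p$ tangent to $E(p)$ lies in $E(p)$, so $\nabla_X X|_p = 0$ for $X \in T_p E(p) = D(p)$; polarise). The Gauss equation then yields the key identity $K^{(1)}_{uv} = K^M_{uv}$ for every non-collinear pair $u,v \in D(p)$, where $K^M$ denotes the sectional curvature of $M$.

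Next, formula (\ref{tw}) gives $T(u,v) = -V[X,Y]_p$ for horizontal basic extensions $X,Y$ of $u,v$, so that the sectional torsion is
$$t_{uv} = \frac{\|V[X,Y]_p\|^2}{\|u\wedge v\|^2}.$$
O'Neill's formula \cite{On}, applied pointwise at $p$, reads
$$K^{B}_{d\pi(u)\,d\pi(v)} = K^M_{uv} + \frac{3}{4}\cdot\frac{\|V[X,Y]_p\|^2}{\|u\wedge v\|^2} = K^M_{uv} + \frac{3}{4}\,t_{uv}.$$
On the other hand, Theorem 1.3 of \cite{Sol84} gives $K_{uv} = K^{(1)}_{uv} + (3/4)\,t_{uv}$. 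Substituting $K^{(1)}_{uv} = K^M_{uv}$ from the previous step, the two right-hand sides coincide, so $K_{uv} = K^{B}_{d\pi(u)\,d\pi(v)}$, which is the required equality.

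The step I expect to be most delicate is the umbrella identity $K^{(1)}_{uv} = K^M_{uv}$. The complete geodesic property of $D$ is precisely what converts $E(p)$ into the ordinary $M$-exponential image of $D(p)$; without it, $E(p)$ would be traced by $\overline{\nabla}$-geodesics that differ from $M$-geodesics already at second order, its second fundamental form at $p$ would not vanish, and $K^{(1)}_{uv}$ would carry additional extrinsic terms that would destroy the cancellation with $(3/4)\,t_{uv}$. Once this step is secured, Theorem 1.3 and O'Neill's formula plug into one another and give the conclusion with no further calculation.
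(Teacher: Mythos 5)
Your proof is correct and follows essentially the same route the paper indicates: it assembles Solov'ev's Theorem 1.3 ($K_{uv}=K^{(1)}_{uv}+(3/4)t_{uv}$), formula (\ref{tw}), the totally geodesic property of the horizontal distribution (vanishing of $h^{+}$, since $h$ restricted to horizontal fields is O'Neill's alternating $A$-tensor), and O'Neill's curvature formula, exactly the four ingredients the paper cites before stating Theorem \ref{subm}. Your explicit justification of the umbrella identity $K^{(1)}_{uv}=K^{M}_{uv}$ via the vanishing of the second fundamental form of $E(p)$ at its vertex is the right way to make the sketched argument precise.
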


\begin{remark}
\label{trsym}
Application of this theorem to sub-Riemannian manifolds includes the case of 
{\it sub-Riemannian manifolds with transverse symmetries} considered in \cite{BaudGar}.
\end{remark}

The following theorem transmits the content of Lemma 4.1 in \cite{Sol84}.
\begin{theorem}
\label{grlie}
Let $G$ be a Lie group with left-invariant Riemannian metric $(\cdot,\cdot)$ and 
distribution $D;$ $e_i,$ $i=1,\dots, m,m+1,\dots, n$ an orthonormal basis of
left-invariant vector fields on $(G,(\cdot,\cdot)),$ $m=\dim D,$ 
$c_{ijk}=([e_i,e_j],e_k).$ Then for $a\neq b,$ $1\leq a,b\leq m,$ 
\begin{eqnarray}
\nonumber K_{e_ae_b}=(1/2)\sum_{i=1}^nc_{abi}(c_{bia}+c_{iab})+
\sum_{j=1}^{m}\{(1/4)(c_{jab}+c_{jba})^2-(3/4)(c_{abj})^2-c_{jaa}c_{jbb}\}.
\end{eqnarray}
\end{theorem}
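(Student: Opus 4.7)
The plan is to apply formula \eqref{kttwo} with $X=e_a$, $Y=e_b$, $Z=e_b$, $W=e_a$ and evaluate every resulting term via the Koszul formula for the Levi-Civita connection of the left-invariant metric on $G$. Since $\|e_a\wedge e_b\|^2=1$,
\[
K_{e_ae_b}=(R(e_a,e_b)e_b,e_a)-2(h^{-}(e_a,e_b),h^{-}(e_b,e_a))+(h(e_a,e_a),h(e_b,e_b))-(h(e_b,e_a),h(e_a,e_b)).
\]
On left-invariant vector fields Koszul reduces to the purely algebraic identity $2\Gamma^{k}_{ij}:=2(\nabla_{e_i}e_j,e_k)=c_{ijk}-c_{jki}+c_{kij}$; together with the skew-symmetry $c_{ijk}=-c_{jik}$, this is the only tool I would need.

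First I would compute the three $h$-contributions. Since $\overline{\nabla}_XY=H\nabla_XHY+V\nabla_XVY$, one has $h(X,Y)=V\nabla_XY$ for $X,Y\in\mathbf{H}$, hence $h(e_a,e_b)=\sum_{j>m}\Gamma^{j}_{ab}e_j$; by \eqref{tw}, $h^{-}(e_a,e_b)=\tfrac12 V[e_a,e_b]=\tfrac12\sum_{j>m}c_{abj}e_j$. A short Koszul calculation using $c_{aaj}=0$ yields $\Gamma^{j}_{aa}=c_{jaa}$, $\Gamma^{j}_{bb}=c_{jbb}$, together with $2\Gamma^{j}_{ab}=c_{abj}+c_{jab}+c_{jba}$ and $2\Gamma^{j}_{ba}=-c_{abj}+c_{jab}+c_{jba}$, so that $\Gamma^{j}_{ab}\Gamma^{j}_{ba}=\tfrac14[(c_{jab}+c_{jba})^{2}-c_{abj}^{2}]$. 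Adding the three $h$-terms collapses them to the compact expression
\[
-\sum_{j>m}\Bigl[\tfrac14(c_{jab}+c_{jba})^{2}-\tfrac34 c_{abj}^{2}-c_{jaa}c_{jbb}\Bigr].
\]

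For the Riemannian part I would expand $R(e_a,e_b)e_b=\nabla_{e_a}\nabla_{e_b}e_b-\nabla_{e_b}\nabla_{e_a}e_b-\nabla_{[e_a,e_b]}e_b$, pair with $e_a$, and substitute every $\Gamma^{k}_{ij}$ from Koszul. This is the standard Milnor-type computation of the Riemannian sectional curvature on a Lie group with left-invariant metric; after grouping monomials via $c_{ijk}=-c_{jik}$ it takes the shape
\[
(R(e_a,e_b)e_b,e_a)=\sum_{p=1}^{n}\Bigl[\tfrac14(c_{pab}+c_{pba})^{2}-\tfrac34 c_{abp}^{2}-c_{paa}c_{pbb}\Bigr]+\tfrac12\sum_{i=1}^{n}c_{abi}(c_{iab}+c_{bia}).
\]
Adding this to the previous display, the sums over $j>m$ cancel identically and exactly the formula of the theorem remains.

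The main obstacle is the Milnor-style expansion of $(R(e_a,e_b)e_b,e_a)$: it requires reorganizing roughly a dozen quadratic monomials in the $c_{ijk}$ and recognizing that they group precisely into the combination $\tfrac14(c_{pab}+c_{pba})^{2}-\tfrac34 c_{abp}^{2}-c_{paa}c_{pbb}$ plus the antisymmetric term $\tfrac12 c_{abi}(c_{iab}+c_{bia})$. Everything else is routine; in particular the cancellation over $j>m$ between the Riemannian and the $h$-corrections is immediate once both pieces are written in this common form.
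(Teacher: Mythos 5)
Your proposal is correct, and it supplies a derivation that the paper itself omits: Theorem \ref{grlie} is stated there only as a transcription of Lemma 4.1 of \cite{Sol84}, with no proof given. Your route --- specialize the Gauss-type identity (\ref{kttwo}) to $X=e_a$, $Y=e_b$, $Z=e_b$, $W=e_a$, compute $h(e_i,e_j)=V\nabla_{e_i}e_j$ and $h^-(e_a,e_b)=\tfrac12 V[e_a,e_b]$ from (\ref{tw}), and feed everything through the Koszul identity $2(\nabla_{e_i}e_j,e_k)=c_{ijk}-c_{jki}+c_{kij}$ --- is the natural one and is surely the computation behind Solov'ev's lemma. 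I checked the algebra: writing $A=c_{abk}$, $B=c_{kab}$, $C=c_{kba}$, the three $h$-terms contribute $\tfrac12 A^2+\tfrac14 A^2-\tfrac14(B+C)^2+c_{kaa}c_{kbb}$ for each $k>m$, i.e.\ exactly minus the bracketed expression of the theorem, while Milnor's left-invariant sectional-curvature formula rearranges, term by term, into $\tfrac14(B+C)^2-\tfrac34 A^2-c_{kaa}c_{kbb}+\tfrac12 A(B-C)$ summed over all $k$; adding the two truncates the quadratic bracket to $j\le m$ and leaves the full sum $\tfrac12\sum_i c_{abi}(c_{bia}+c_{iab})$, which is precisely the asserted formula (and correctly degenerates to Milnor's formula when $m=n$, $h\equiv 0$). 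The only point worth flagging is that you invoke the Milnor expansion of $(R(e_a,e_b)e_b,e_a)$ in the stated grouped form without carrying it out; that grouping is a known identity (Lemma 1.1 of \cite{Miln}) and I verified it matches your display, so the gap is one of labour, not of substance.
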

The following Proposition 4.7 from \cite{Sol84} is valid:
\begin{theorem}
\label{grliebi}
The curvature tensor of any left-invariant distribution $D$ on the Lie group $G$ 
with bi-invariant Riemannian metric is equal to  
\begin{eqnarray}
K(X,Y)Z=(1/4)H[X,H[Y,Z]]+ (1/4)H[Y,H[Z,X]] + \nonumber\\
\nonumber (1/2)H[Z,H[X,Y]] + H[Z,V[X,Y]],\quad X,Y,Z\in {\bf H}.
\end{eqnarray}
\end{theorem}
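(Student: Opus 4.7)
The plan is to apply formula (\ref{ktone}), $(K(X,Y)Z,W)=(\overline{R}(X,Y)Z,W)-(1/2)(T(X,Y),T(Z,W))$, to left-invariant $X,Y,Z,W\in\mathbf{H}$, and to use two facts characteristic of a bi-invariant metric on $G$: the Levi-Civita connection satisfies $\nabla_XY=(1/2)[X,Y]$ on left-invariant vector fields, and $\ad(U)$ is skew-symmetric relative to $(\cdot,\cdot)$ for every left-invariant $U$. Note that $H$ and $V$ commute with left translations since $D$ is left-invariant, so $H[Y,Z]$, $V[X,Y]$, etc., are again left-invariant, which will allow the iterated use of $\nabla_XY=(1/2)[X,Y]$.

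First I would compute the induced connection: for left-invariant $X,Y\in\mathbf{H}$ one has $\overline{\nabla}_XY=H\nabla_XHY+V\nabla_XVY=(1/2)H[X,Y]$. Iterating gives
\[
\overline{\nabla}_X\overline{\nabla}_YZ=(1/4)H[X,H[Y,Z]],\qquad
\overline{\nabla}_Y\overline{\nabla}_XZ=(1/4)H[Y,H[X,Z]]=-(1/4)H[Y,H[Z,X]],
\]
using antisymmetry of the bracket. For the remaining term, one splits $[X,Y]=H[X,Y]+V[X,Y]$ and applies $\overline{\nabla}_{U}Z=(1/2)H[U,Z]$ to each piece, obtaining
\[
-\overline{\nabla}_{[X,Y]}Z=-(1/2)H[H[X,Y],Z]-(1/2)H[V[X,Y],Z]=(1/2)H[Z,H[X,Y]]+(1/2)H[Z,V[X,Y]].
\]
Collecting these three contributions, $\overline{R}(X,Y)Z$ equals the right-hand side of the claimed formula except that $H[Z,V[X,Y]]$ appears with coefficient $1/2$ instead of $1$.

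The missing half has to come from the torsion correction. By (\ref{tw}), $T(X,Y)=-V[X,Y]$ and $T(Z,W)=-V[Z,W]$, so $-(1/2)(T(X,Y),T(Z,W))=-(1/2)(V[X,Y],V[Z,W])$. Bi-invariance gives $([Z,V[X,Y]],W)=-(V[X,Y],[Z,W])$, and since $V[X,Y]\in D^{\perp}$ while $H[Z,W]\in D$, only the $V$-component contributes: $-(V[X,Y],V[Z,W])=([Z,V[X,Y]],W)=(H[Z,V[X,Y]],W)$, the last equality because $W\in\mathbf{H}$. Therefore the torsion correction contributes exactly $(1/2)(H[Z,V[X,Y]],W)$, upgrading the coefficient of $H[Z,V[X,Y]]$ from $1/2$ to $1$. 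Since $K(X,Y)Z\in\mathbf{H}$ and $W\in\mathbf{H}$ is arbitrary, this determines $K(X,Y)Z$ and yields the stated identity.

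The main obstacle, if any, is organizational: recognizing at the outset that the torsion term in (\ref{ktone}) must be rewritten via skew-symmetry of $\ad$ as a horizontal bracket, so that it coalesces with the contribution coming from $\overline{\nabla}_{V[X,Y]}Z$ into a single $H[Z,V[X,Y]]$. The rest is a mechanical application of $\nabla_XY=(1/2)[X,Y]$ together with the antisymmetry of the Lie bracket; Jacobi is not needed at any stage.
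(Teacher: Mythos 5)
Your derivation is correct. Note, however, that the paper itself offers no proof of this statement: it is quoted verbatim as Proposition 4.7 of \cite{Sol84}, so there is no in-paper argument to compare against. Your route --- specialize (\ref{ktone}) to left-invariant fields, use $\nabla_XY=\frac12[X,Y]$ to get $\overline{\nabla}_XY=\frac12H[X,Y]$ and hence the three curvature terms of $\overline{\nabla}$, then convert the torsion correction $-\frac12(V[X,Y],V[Z,W])$ into $\frac12(H[Z,V[X,Y]],W)$ via skew-symmetry of $\operatorname{ad}Z$ and orthogonality of $\mathbf{H}$ and $\mathbf{V}$ --- is exactly the natural one, and every step checks out, including the bookkeeping that raises the coefficient of $H[Z,V[X,Y]]$ from $\frac12$ to $1$. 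One small caveat worth making explicit: the right-hand side of the stated formula is not $C^\infty$-linear in $Z$ (the non-tensorial pieces do not all cancel), so the identity holds only for left-invariant $X,Y,Z$, not for arbitrary elements of $\mathbf{H}$; your restriction to left-invariant fields is therefore not merely a convenience but essential, and the statement should be read with that convention, as is standard for such formulas on Lie groups.
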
 

Other papers by Solov'ev on the same subject are \cite{Sol85}, 
\cite{Sol86}, \cite{Sol87}.

\section{Contact and symplectic structures}

A smooth differential 1-form $\omega$ on a smooth manifold $M=M^{2k+1}$ 
is called {\it contact} if $\omega\wedge\Lambda^kd\omega\neq 0$ everywhere on
$M.$ A manifold with a contact form is called {\it contact} \cite{Blair}. 
By theorem of G.~Darboux, any point of a contact manifold is contained in some
neighborhood $U$ with coordinates $x^1,\dots,x^k,x^{k+1},y_1,\dots,y_k$ such that 
in these coordinates $\omega|_U=\sum_{i=1}^ky_idx^i+dx^{k+1}$ \cite{Stern}. 

{\it A contact distribution} on a contact manifold $(M,\omega)$ is the null 
set of its contact form, i.e.
\begin{equation}
\label{cont} 
D=\bigcup_{x\in M}D(x),\quad D(x)=\{v\in T_xM| \omega_x(v)=0\}.
\end{equation}
It is clear that $D$ is a smooth vector hyperdistribution on $M$.

\begin{theorem}
\label{rigg}
A contact distribution on any contact manifold is completely nonholonomic and has a canonical
rigging.
\end{theorem}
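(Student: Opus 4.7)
The plan is to prove both assertions directly from the defining identity $\omega\wedge(d\omega)^k\neq 0$, together with the Cartan formula for $d\omega$.

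First, for complete nonholonomicity, I observe that for any smooth vector fields $X,Y$ tangent to $D=\ker\omega$, the identity
\[
d\omega(X,Y)=X\omega(Y)-Y\omega(X)-\omega([X,Y])=-\omega([X,Y])
\]
holds, since $\omega(X)=\omega(Y)=0$. The contact condition $\omega\wedge(d\omega)^k\neq 0$ is equivalent to the assertion that $(d\omega)^k|_D$ is a volume form on the $2k$-plane field $D$, i.e.\ $d\omega|_D$ is a symplectic (nondegenerate) $2$-form. Consequently, at each $x\in M$ one can choose $u,v\in D(x)$ with $d\omega_x(u,v)\neq 0$, extend them to vector fields $X,Y$ tangent to $D$ near $x$, and conclude $\omega_x([X,Y]_x)\neq 0$, so $[X,Y]_x\notin D(x)$. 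Because $D$ has codimension one, this single transverse bracket already gives $D(x)+\{[X,Y]_x:X,Y\text{ tangent to }D\}=T_xM$. Hence $D$ is bracket-generating of step $2$, i.e.\ completely nonholonomic.

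Second, for the canonical rigging, I propose to define $D^{\perp}$ as the line field spanned by the Reeb vector field of $\omega$; the point is that $\omega$ alone, with no auxiliary metric, produces a complementary distribution. At each $x$, the $2$-form $d\omega_x$ on the odd-dimensional space $T_xM$ has a nontrivial radical; by the preceding paragraph its restriction to $D(x)$ is symplectic, so this radical is exactly one-dimensional and transverse to $D(x)$. Since $\omega_x$ is nonzero on any vector transverse to $D(x)$, there is a unique $\xi_x\in T_xM$ characterized by
\[
\iota_{\xi_x}d\omega_x=0,\qquad \omega_x(\xi_x)=1.
\]
Smoothness of $x\mapsto\xi_x$ follows from solving these linear equations by Cramer's rule in a Darboux chart, where one verifies directly that $\xi=\partial/\partial x^{k+1}$ in the coordinates of the theorem of Darboux quoted above. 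Setting $D^{\perp}:=\mathbb{R}\xi$ then gives a one-dimensional distribution with $D\oplus D^{\perp}=TM$, and its construction uses only the datum $\omega$, which is what ``canonical'' means here.

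The only step that is not purely algebraic bookkeeping is the transversality and uniqueness of $\xi$: this is the main obstacle only in the sense that one must correctly count dimensions of the radical of $d\omega_x$ against its nondegeneracy on $D(x)$. The Darboux normal form provides an immediate sanity check and also makes the smoothness of $\xi$ transparent. No further input is needed, so the two assertions of Theorem \ref{rigg} follow.
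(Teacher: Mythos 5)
Your proof is correct and follows essentially the same route as the paper: complete nonholonomicity via the Cartan formula $d\omega(X,Y)=-\omega([X,Y])$ for $X,Y$ tangent to $D$ together with the nondegeneracy of $d\omega|_D$, and the canonical rigging as the line field spanned by the Reeb vector field $W$ determined by $\omega(W)=1$, $\iota_Wd\omega=0$. You merely supply a bit more detail (the dimension count for the radical of $d\omega_x$, smoothness via the Darboux chart, and the explicit remark that codimension one forces step-two bracket generation) than the paper's proof does.
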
 

\begin{proof}
Since the form $\omega\wedge\Lambda^kd\omega$ is non-degenerate and $M$ is odd-dimensional,
the following statements are valid: 

1) for any point $x\in M$ there exists a unique vector $w_x\in T_xM$ such that 
$\omega_x(w_x)=1$ and $d\omega(w_x,v)=0$ for all $v\in T_xM$;  

2) if $u\in D(x)$ is a non-zero vector, then there exists $v\in D(x)$: 
$d\omega(u,v)\neq 0.$

A vector field $W$ on $(M,\omega)$ such that $W(x)=w_x,$ is called 
{\it a Reeb vector field}. The distribution $D^{\perp}$ on $M,$ spanned by
the Reeb vector field is a canonical rigging of distribution $D.$

Let $U,V\in D$ be arbitrary vector fields on $M$ such that  
$U(x)=u$ and $V(x)=v$ for vectors $u,v\in D(x)$ from p. 2) above.
Then \cite{Stern}
$$d\omega(U,V)=U(\omega(V))-V(\omega(U))-\omega([U,V])=-\omega([U,V])$$
and $[U,V](x)\notin D(x),$ which proves that the distribution $D$ is completely nonholonomic.
\end{proof}

\begin{theorem}
\label{invcd}
Any contact distribution is invariant with respect to the local one-parameter transformation
group generated by the Reeb vector field.
\end{theorem}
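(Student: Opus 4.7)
The plan is to show that the flow $\phi_t$ of the Reeb vector field $W$ carries $D$ to itself by verifying that it preserves the contact form $\omega$ itself; since $D=\ker\omega$ is defined purely in terms of $\omega$, its invariance then follows automatically. Invariance of $\omega$ under the flow is equivalent to the vanishing of the Lie derivative $\mathcal{L}_W\omega$.

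First I would recall from the proof of Theorem \ref{rigg} the two defining properties of the Reeb field, namely $\omega(W)=1$ and $i_W d\omega=0$ (where $i_W$ denotes interior multiplication). Next I would apply Cartan's magic formula
\begin{equation*}
\mathcal{L}_W\omega = i_W\, d\omega + d(i_W\omega).
\end{equation*}
The first term on the right-hand side vanishes by the second Reeb property, while $i_W\omega = \omega(W) = 1$ is a constant function, so its exterior derivative is zero. Thus $\mathcal{L}_W\omega=0$, which means $\phi_t^{\ast}\omega=\omega$ on the open domain where the local flow $\phi_t$ is defined.

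To conclude, I would argue pointwise: for any $x$ in the domain of $\phi_t$ and any $v\in D(x)$, one has
\begin{equation*}
\omega_{\phi_t(x)}(d\phi_t(v)) = (\phi_t^{\ast}\omega)_x(v) = \omega_x(v) = 0,
\end{equation*}
so $d\phi_t(v)\in D(\phi_t(x))$, which is exactly the invariance of the distribution $D$ under the local one-parameter group generated by $W$.

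There is no serious obstacle in the argument; the only subtlety worth mentioning is that the flow of $W$ is only locally defined in general (the manifold $M$ need not be compact and $W$ need not be complete), so the statement must be understood in the local sense, which is precisely how it is phrased in the theorem.
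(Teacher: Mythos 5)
Your proof is correct and is essentially the paper's argument in dual form: the paper expands $d\omega(W,X)=W(\omega(X))-X(\omega(W))-\omega([W,X])$ for $X$ tangent to $D$ and uses $\omega(X)=0$, $\omega(W)\equiv 1$, $d\omega(W,\cdot)=0$ to conclude $\omega([W,X])=0$, i.e.\ that the Lie derivative of sections of $D$ along $W$ stays in $D$, whereas you run the equivalent computation via Cartan's formula to get $\mathcal{L}_W\omega=0$ and hence $\phi_t^{\ast}\omega=\omega$. If anything, your version is slightly more complete, since it makes explicit the passage from the infinitesimal statement to invariance under the local flow (and even yields the stronger fact that the flow preserves the contact form itself, not just its kernel), a step the paper leaves implicit.
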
 

\begin{proof}
This arises from the following inequalities for $X$ tangent to $D$
\begin{equation}
\label{neccont}
0=d\omega(W,X)=W(\omega(X))-X(\omega(W))-\omega([W,X])= \omega(-[W,X])=0
\end{equation}
and the fact that $-[W,X]$ is the {\it Lie derivative of the vector field $X$ in the direction
of the vector field $W$} \cite{Stern}.
\end{proof} 
Notice that a non-zero differential 1-form, proportional to a contact form, 
is itself a contact form. Therefore the Reeb vector field depends on
the contact form.

A smooth closed differential 2-form $\sigma$ on a smooth manifold $M=M^{2k}$ 
is called {\it symplectic}, if its $k$-th exterior degree  
$\Lambda^k\sigma=\sigma\wedge\dots\wedge\sigma\neq 0$ everywhere on $M.$
A smooth manifold with a symplectic form is called {\it symplectic} 
\cite{Blair}. By theorem of Darboux, for any point of any symplectic manifold
in some its neighborhood $U$ there exist coordinates 
$x^1,\dots,x^k,y_1,\dots,y_k$ such that 
$\sigma=\sum_{i=1}^kdy_i\wedge dx^i$ \cite{Stern}. A non-zero differential
2-form, proportional to a symplectic form, is itself symplectic.  

We shall need {\it the canonical symplectic form on the cotangent bundle} 
$T^{\ast}M$ over an arbitrary smooth manifold $M=M^n$ \cite{Bes}. Let 
$p_M: TM\rightarrow M,$ $p^{\ast}_M:T^{\ast}M\rightarrow M,$ and 
$p_{T^{\ast}M}: TT^{\ast}M\rightarrow T^{\ast}M$ be the canonical projections. 
There exists a unique {\it Liouville form} $\alpha$ on $T^{\ast}M$ such that
$$\alpha(\Lambda)=p_{T^{\ast}M}(\Lambda)(dp^{\ast}_M(\Lambda)),\quad 
\Lambda\in TT^{\ast}M.$$
Clearly $\alpha=\sum_{i=1}^n\xi_idx^i$ in natural coordinates  
$x^i,\xi_i;$ $i=1,\dots,n$ on $T^{\ast}M.$ By definition, $\sigma=d\alpha,$ 
i.e. $\sigma=\sum_{i=1}^nd\xi_i\wedge dx^i$ in natural coordinates.

In the case of a homogeneous (sub-)Riemannian manifold $(M^n=G/H,d)$ (and
not only in this case), the search problem for shortest arcs and geodesics
locally reduces to a time-optimal problem which is formulated as follows in
local coordinates $x,\xi$ on $T^{\ast}M$ \cite{PBGM}. We are given a smooth
mapping $f: X\times E^m\rightarrow \mathbb{R}^n,$ $2\leq m < n,$ such that 
$f(x,\cdot)$ is a linear monomorphism for any $x\in X$ and {\it the
Pontryagin-Hamilton function} 
$$H(x,\xi,u)=\sum_{i=1}^n\xi_if^{i}(x,u),\quad (x,\xi,u)\in 
X\times \mathbb{R}^n\times E^m.$$
If $x=x(t)$ is a geodesic parametrized by arc length in $(M,d)$ then there
exists a continuous function $\xi=\xi(t)\neq 0$ such that for almost all
$t$ there exist $u(t)$, the derivatives 
\begin{eqnarray}
\label{difgeod}
\stackrel{.}{\xi_j}(t)=
-\frac{\partial  H((x,\xi,u)(t))}{\partial x_j}= - \sum_{i=1}^{n}\xi_i(t)
\frac{\partial f^i(x(t),u(t))}{\partial x^j},\nonumber\\ 
\stackrel{.}{x}(t)=\frac{\partial H((x,\xi,u)(t))}{\partial \xi}=
f(x(t),u(t)), \quad \|u(t)\|=1, 
\end{eqnarray}
and the following condition is fulfilled   
$$\quad H(x(t),\xi(t),u(t))=\max \{H(x(t),\xi(t),u)| \|u\|\leq 1\}\equiv
M_0\geq 0.$$

A geodesic $x=x(t)$ in $(M,d)$ is called \textit{normal} if $M_0> 0$ and \textit{abnormal} 
if $M_0=0.$ It is called \textit{strictly abnormal} if there is no 
covector function $\xi=\xi(t)$ for which it is normal extremal in $(G,d).$ 
As was shown in the paper \cite{LS} by W.~Liu and H.~Sussman, geodesics of 
a left-invariant sub-Riemannian metric $d$ on a Lie group $G$ could be 
strictly abnormal. We shall consider their example at the end of this paper.
 
In any case the ODE (\ref{difgeod}) defines the {\it Hamiltonian system} 
(vector field)
\begin{equation}
\label{hams}
\stackrel{.}{x}=f(x,u),\quad \stackrel{.}{\xi_j}=- \sum_{i=1}^{n}\xi_i
\frac{\partial f^i(x,u)}{\partial x^j}. 
\end{equation}
Therefore $H(x,\xi,u)=\alpha(\phi(x,\xi,u)),$ where 
$\phi(x,\xi,u)=(\stackrel{.}{x},\stackrel{.}{\xi})(x,\xi,u).$ 

In the case of a Lie group $G$ with Lie algebra $\mathfrak{g}$ we understand
elements of the pair $(\xi,u)\in \mathfrak{g}^{\ast}\times \mathfrak{g}$ respectively
as a left-invariant 1-form and a left-invariant vector field on $G$. Then the
Pontryagin-Hamilton function $H(\xi,u)=\xi(u)$ is defined on 
$\mathfrak{g}^{\ast}\times \mathfrak{g}$. In \cite{Ber14} we proved the following
\begin{theorem}
\label{hlg}
For any Lie group $G$ with Lie algebra $\mathfrak{g},$ the Hamiltonian system for
the function $H$ on $\mathfrak{g}^{\ast}\times \mathfrak{g}$ takes the form
\begin{equation}
\label{gpr}
\stackrel{.}{g}=dl_g(u), \quad g\in G,\quad u\in \mathfrak{g},
\end{equation}
\begin{equation}
\label{xipr}
\stackrel{.}{\xi}(w)=\xi([u,w]),\quad u, w \in \mathfrak{g}.
\end{equation}
\end{theorem}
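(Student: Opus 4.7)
The plan is to pull back the Hamiltonian system (\ref{hams}) along the left-trivialization $T^{\ast}G\cong G\times \mathfrak{g}^{\ast}$ and recognize the resulting equations on $\mathfrak{g}^{\ast}\times\mathfrak{g}$. In this trivialization a covector $\xi_g\in T_g^{\ast}G$ corresponds to the element $\Xi\in \mathfrak{g}^{\ast}$ defined by $\Xi(w)=\xi_g(dl_g(w))$ for $w\in \mathfrak{g}$, while the controlled dynamics on $G$ is $f(g,u)=dl_g(u)$. The Pontryagin--Hamilton function then reduces precisely to $H(\Xi,u)=\Xi(u)$, and the first half of (\ref{hams}), $\dot{x}=f(x,u)$, reads $\dot{g}=dl_g(u)$ without further work, which is (\ref{gpr}).

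For (\ref{xipr}), the approach is to differentiate the identification $\Xi$ along the trajectory. Choose a basis $e_1,\dots,e_n$ of $\mathfrak{g}$, let $Y_k$ be the corresponding left-invariant vector fields with local components $Y_k^i(x)$, so that $f^i(x,u)=Y_k^i(x)u^k$ and $\Xi_k(t)=\xi_i(t)\,Y_k^i(g(t))$. Differentiating in $t$ gives
\begin{equation*}
\dot{\Xi}_k=\dot{\xi}_i\,Y_k^i+\xi_i\,\dot{g}^j\,\partial_j Y_k^i.
\end{equation*}
I would then substitute $\dot{\xi}_i=-\xi_l u^m\partial_i Y_m^l$ from the second half of (\ref{hams}) and $\dot{g}^j=Y_m^j u^m$ from (\ref{gpr}), and collect terms to obtain
\begin{equation*}
\dot{\Xi}_k=\xi_l u^m\bigl(Y_m^j\partial_j Y_k^l-Y_k^j\partial_j Y_m^l\bigr)=\xi_l u^m\,[Y_m,Y_k]^l,
\end{equation*}
which is the standard coordinate expression for the Lie bracket of vector fields.

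The key simplification is that left-invariant vector fields satisfy $[Y_m,Y_k]=Y_{[e_m,e_k]}$, so $\xi_l[Y_m,Y_k]^l=\Xi([e_m,e_k])$. Contracting with an arbitrary $w=w^k e_k$ then gives
\begin{equation*}
\dot{\Xi}(w)=w^k u^m\,\Xi([e_m,e_k])=\Xi([u,w]),
\end{equation*}
which is (\ref{xipr}). The only step that needs care is bookkeeping the conventions for left translation and the Lie-bracket sign so that the coefficient comes out as $+\Xi([u,w])$ rather than its negative; there is no genuine obstacle, since once the identification $T^{\ast}G\cong G\times\mathfrak{g}^{\ast}$ via $(l_g)^{\ast}$ is fixed, the cancellation producing the bracket is forced. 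In the background this is precisely Poisson reduction of the canonical symplectic form on $T^{\ast}G$ by the left $G$-action, whose reduced bracket on $\mathfrak{g}^{\ast}$ is the Lie--Poisson one, and the theorem is the explicit form of the Lie--Poisson equation for the linear Hamiltonian $H(\Xi,u)=\Xi(u)$.
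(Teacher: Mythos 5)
Your argument is correct, but note that the paper does not actually contain a proof of Theorem \ref{hlg}: it is quoted from the earlier work \cite{Ber14} (``In \cite{Ber14} we proved the following''), so there is no in-paper proof to match against. Your route --- left-trivializing $T^{\ast}G\cong G\times\mathfrak{g}^{\ast}$ by $\Xi(w)=\xi_g(dl_g(w))$, which is exactly what the paper means by reading $\xi$ as a left-invariant $1$-form, and then differentiating $\Xi_k(t)=\xi_i(t)Y_k^i(g(t))$ along a trajectory of (\ref{hams}) so that the two terms assemble into $[Y_m,Y_k]^l$ --- is the standard and fully rigorous way to do it, and your sign is consistent with the paper's conventions: with $\ad^{\ast}u\,\xi(v)=\xi([v,u])$ one checks that $\xi(t)=(\Ad^{\ast}g(t))^{-1}\xi_0$ satisfies $\dot{\xi}(w)=\xi([u,w])$, in agreement with Corollary \ref{adj}. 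For comparison, the companion result Theorem \ref{mai} is proved in the paper by first reducing to matrix Lie groups via Ado's theorem and then differentiating $\xi(g(t)^{-1}vg(t))$ directly; the analogous matrix computation would give Theorem \ref{hlg} in two lines, at the cost of invoking Ado and local isomorphism, whereas your coordinate/left-trivialization argument is intrinsic and works verbatim for any Lie group. The only point worth making explicit in a final write-up is the identity $[Y_m,Y_k]=Y_{[e_m,e_k]}$ for left-invariant fields (i.e.\ that the paper's bracket on $\mathfrak{g}=T_eG$ is the one induced by left-invariant vector fields), since that is where the sign of (\ref{xipr}) is decided.
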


For a fixed $u\in \mathfrak{g}$, in correspondence with ODEs (\ref{gpr}) and 
(\ref{xipr}), there is defined a vector field $U$ on $T^{\ast}M$: 
$U(g,\xi)=(\stackrel{.}{g},\stackrel{.}{\xi}(\cdot))=(dl_g(u),\xi([u,\cdot])).$
Let $V$ be an analogous vector field on $T^{\ast}M$, defined by an element 
$v\in \mathfrak{g}.$ Then
$$\sigma(U,V)((g,\xi))=d\alpha(U,V)((g,\xi))=
(U(\alpha(V))-V(\alpha(U))-\alpha([U,V]))((g,\xi))=$$
$$\xi([u,v])-\xi([v,u])-\xi([u,v])=\xi([u,v]).$$
Thus,
\begin{equation}
\label{hambr}
\sigma(U,V)((g,\xi))=\xi([u,v]).
\end{equation}

Below, the differential of any smooth mapping $f$ of smooth manifolds is denoted by $df$. 
Let us define the following mappings for the Lie group $G$ \cite{BN}:
$$I(g): G\rightarrow G;\quad I(g)(g')=gg'g^{-1},$$
$$\Ad g=dI(g)_e: \mathfrak{g}=T_eG\rightarrow T_eG= \mathfrak{g},$$
$$\ad=(d\Ad)_e,\quad \ad u(v)=[u,v],\quad u,v\in \mathfrak{g},$$
$$\Ad^{\ast} g:\mathfrak{g}^{\ast}\rightarrow \mathfrak{g}^{\ast},\quad 
\Ad^{\ast}g\xi(v)=\xi((\Ad g)^{-1}(v)),\quad v\in \mathfrak(g),\quad \xi\in 
\mathfrak{g}^{\ast},$$
$$\ad^{\ast}u \xi(v)=\xi(-\ad u(v))=\xi([v,u]),\quad 
u,v\in \mathfrak{g},\quad \xi\in \mathfrak{g}^{\ast}.$$  

\begin{theorem}
\label{mai}
Let $G$ be a Lie group with Lie algebra $\mathfrak{g}$ and unit $e$, 
$\xi\in \mathfrak{g}^{\ast}=T^{\ast}_{e}G$ co-vector, $\Ad^{\ast}\xi(g):= 
\Ad^{\ast}g(\xi),$ $g\in G,$ the action of co-adjoint representation 
of the Lie group $G$ to the co-vector $\xi.$ Then
\begin{equation}
\label{diff}
(d(\Ad^{\ast}\xi)(w))(v)=\ad^{\ast}u(\Ad^{\ast}g_0\xi(v)),
\end{equation}
if
\begin{equation}
\label{data} 
u,v\in \mathfrak{g},\quad w=dl_{g_0}(u)\in T_{g_0}G,\quad g_0\in G.
\end{equation}
\end{theorem}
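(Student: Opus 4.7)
The plan is to compute the differential by a curve through $g_0$ tangent to $w$ and apply the chain rule together with the classical identity $\tfrac{d}{dt}\big|_{t=0}\Ad^{\ast}(\exp(tu))=\ad^{\ast}u$. The key observation is that $\Ad$ is a Lie group homomorphism, so $\Ad^{\ast}$ satisfies $\Ad^{\ast}(g_1g_2)=\Ad^{\ast}(g_1)\circ\Ad^{\ast}(g_2)$ (which follows from $\Ad(g_1g_2)^{-1}=\Ad(g_2)^{-1}\Ad(g_1)^{-1}$ together with the definition $\Ad^{\ast}g\,\xi(v)=\xi(\Ad(g)^{-1}v)$); this lets us factor off the $g_0$-part of the curve and reduce the problem to a one-parameter calculation at the identity.

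Concretely, I would select a smooth curve $g(t)$ in $G$ with $g(0)=g_0$ and $g'(0)=w=dl_{g_0}(u)$, chosen so that $\Ad^{\ast}g(t)\xi$ factors as $\Ad^{\ast}(\exp(tu))$ applied to the constant covector $\eta:=\Ad^{\ast}g_0\xi$. By the chain rule, $(d(\Ad^{\ast}\xi)(w))(v)=\tfrac{d}{dt}\big|_{t=0}(\Ad^{\ast}(\exp(tu))\,\eta)(v)$. Evaluating on $v\in\mathfrak{g}$ and using $\Ad(\exp(tu))=\exp(t\,\ad u)$ together with $\Ad^{\ast}g\,\eta(v)=\eta(\Ad(g)^{-1}v)$, the derivative at $t=0$ becomes $\eta(-[u,v])=\eta([v,u])=\ad^{\ast}u(\eta)(v)$. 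Substituting $\eta=\Ad^{\ast}g_0\xi$ gives the claimed equality.

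The main obstacle I foresee is careful bookkeeping of conventions: one must check that the chosen curve really has initial velocity $dl_{g_0}(u)$ (and not a version twisted by $\Ad g_0$, which would replace $u$ by $\Ad(g_0)u$ in the final answer), and that the order of composition in $\Ad^{\ast}(g_1g_2)=\Ad^{\ast}(g_1)\circ\Ad^{\ast}(g_2)$ is applied correctly relative to the paper's translation convention. Up to this essentially notational care, the theorem is a two-line consequence of the chain rule and the definition of the co-adjoint representation, so I do not anticipate any serious technical difficulty.
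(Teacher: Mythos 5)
Your overall strategy --- use the homomorphism property of $\Ad^{\ast}$ to reduce to a one-parameter computation at the identity --- is genuinely different from the paper's proof, which invokes Ado's theorem to pass to a matrix group and then differentiates $\xi(g(t)^{-1}vg(t))$ directly via the product rule and the auxiliary identity $(g(t)^{-1})'=-g(t)^{-1}g'(t)g(t)^{-1}$. However, the point you dismissed as ``essentially notational care'' is a substantive gap, and resolving it changes your final formula. Since $\Ad^{\ast}(g_1g_2)=\Ad^{\ast}(g_1)\circ\Ad^{\ast}(g_2)$, the factorization $\Ad^{\ast}g(t)\xi=\Ad^{\ast}(\exp(tu))\eta$ with $\eta:=\Ad^{\ast}g_0\xi$ forces $g(t)=\exp(tu)g_0$, whose initial velocity is $dr_{g_0}(u)=dl_{g_0}(\Ad(g_0)^{-1}u)$, not $w=dl_{g_0}(u)$. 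The curve with the correct velocity is $g(t)=g_0\exp(tu)$, for which the homomorphism property puts the factors in the opposite order, $\Ad^{\ast}g(t)\xi=\Ad^{\ast}(g_0)\bigl(\Ad^{\ast}(\exp(tu))\xi\bigr)$, and hence
\[
\frac{d}{dt}\Big|_{t=0}\Ad^{\ast}g(t)\xi=\Ad^{\ast}g_0(\ad^{\ast}u\,\xi),
\qquad
\Ad^{\ast}g_0(\ad^{\ast}u\,\xi)(v)=\xi([\Ad(g_0)^{-1}v,\,u]).
\]
Your computation instead produces $\eta([v,u])=\xi(\Ad(g_0)^{-1}[v,u])$, which differs from this in general; the two expressions are reconciled only through the equivariance $\Ad^{\ast}g_0\circ\ad^{\ast}u=\ad^{\ast}(\Ad(g_0)u)\circ\Ad^{\ast}g_0$, i.e.\ precisely by replacing $u$ with $\Ad(g_0)u$ --- the twist you worried about and did not rule out.

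Note also that the paper's own proof unambiguously lands on $(\ad^{\ast}u\,\xi)((\Ad g_0)^{-1}(v))=\xi([(\Ad g_0)^{-1}v,u])$; the displayed right-hand side $\ad^{\ast}u(\Ad^{\ast}g_0\xi(v))$ of the theorem is an (admittedly confusing) abbreviation of that quantity, not of $(\ad^{\ast}u(\Ad^{\ast}g_0\xi))(v)$, which is what your reading computes. To repair your argument, either work with $g(t)=g_0\exp(tu)$ and keep $\Ad^{\ast}g_0$ on the outside of the derivative, or write $g_0\exp(tu)=\exp(t\Ad(g_0)u)\,g_0$ and carry the twisted generator $\Ad(g_0)u$ through to the end. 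With that correction your route is clean and arguably preferable to the paper's, since it needs neither Ado's theorem nor the matrix-group reduction.
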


\begin{proof} 
By Ado theorem on existence of exact matrix representation of any Lie algebra,
the third theorem of Lie is valid (Theorem 2.9 in \cite{BN}). Then every 
Lie group is locally isomorphic to a matrix Lie group, possibly, with a strengthened 
topology (see details in Theorem 1 from \cite{Ber14}). Therefore we can suppose that
$G$ is a matrix Lie group. Then 
$\Ad g(v)=gvg^{-1},$ $dl_{g}(u)=gu$ if $u,v\in \mathfrak{g}$ и $g\in G$.

\begin{lemma}
\label{lem}
Let $g=g(t),$ $t\in (a,b)$ be a smooth path in the Lie group $G.$ Then
\begin{equation}
\label{eq}
(g(t)^{-1})'=-g(t)^{-1}\cdot g'(t)\cdot g(t)^{-1}.
\end{equation}
\end{lemma}

\begin{proof}
Differentiating $g(t)\cdot g(t)^{-1}=e$ by $t,$ we get
$$0=(g(t)\cdot g(t)^{-1})'=g'(t)\cdot g(t)^{-1}+g(t)\cdot (g(t)^{-1})',$$
whence immediately follows (\ref{eq}).
\end{proof}

To prove Theorem \ref{mai}, we choose a smooth path $g=g(t),$
$t\in (-\varepsilon,\varepsilon),$ in the Lie group $G$ such that $g(0)=g_0,$ 
$g'(0)=w.$ Then by Lemma \ref{lem},
$$(d(\Ad^{\ast}\xi)(w))(v)=(\xi(g(t)^{-1}\cdot v\cdot g(t)))'(0)=$$
$$\xi((g(t)^{-1}\cdot v\cdot g(t))'(0))=\xi((g(t)^{-1})'(0)\cdot v\cdot g_0+g_0^{-1}\cdot v\cdot g'(0))=$$
$$\xi(-(g_0^{-1}g'(0)g_0^{-1})\cdot v\cdot g_0 +g_0^{-1}\cdot v\cdot g'(0))=$$
$$\xi(-u\cdot(g_0^{-1}\cdot v\cdot g_0) + (g_0^{-1}\cdot v\cdot g_0)\cdot u)=$$
$$\xi(-\ad u((\Ad g_0)^{-1}(v))=\ad^{\ast}u\xi((\Ad g_0)^{-1}(v))=
\ad^{\ast}u(\Ad^{\ast}g_0\xi(v)).$$
\end{proof}

Theorems \ref{hlg} and \ref{mai} immediately imply
\begin{corollary}
\label{adj}
For any connected Lie group $G$ and $\xi_0\in T_e^{\ast}G$ the mapping
$$(\Ad^{\ast}(\cdot))^{-1}(\xi_0): g\in G\rightarrow 
(\Ad^{\ast}g)^{-1}\xi_0\in T^{\ast}_gG$$ 
is a unique section of the bundle $p^{\ast}_G: T^{\ast}G\rightarrow G$, which is a solution of
the Hamiltonian system (\ref{gpr}), (\ref{xipr}) with an initial value $\xi_0$ at $e\in G$.   
\end{corollary}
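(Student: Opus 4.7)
The plan is to verify that the proposed section $s(g):=(\Ad^{\ast}g)^{-1}\xi_0$ satisfies the initial condition and the Hamiltonian system (\ref{gpr})--(\ref{xipr}), and then to invoke ODE uniqueness. The initial condition $s(e)=\xi_0$ is immediate since $\Ad^{\ast}e$ is the identity on $\mathfrak{g}^{\ast}$. Equation (\ref{gpr}) is tautological along any smooth curve $g(t)$ in $G$ with the choice $u(t):=dl_{g(t)^{-1}}(\dot g(t))\in\mathfrak{g}$, so the real content lies in verifying (\ref{xipr}).

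To verify (\ref{xipr}) I would fix a smooth curve $g(t)$ with $g(0)=g_0$ and $\dot g(0)=dl_{g_0}(u)$ for some $u\in\mathfrak{g}$, and set $\xi(t):=s(g(t))$. Unwinding the definition gives $\xi(t)(v)=\xi_0(\Ad g(t)(v))$ for every $v\in\mathfrak{g}$. By the same Ado-theorem reduction used in the proof of Theorem \ref{mai}, one may assume $G$ is a matrix Lie group; then $\xi(t)(v)=\xi_0(g(t)\,v\,g(t)^{-1})$, and Lemma \ref{lem} gives $(g^{-1})'(0)=-u\,g_0^{-1}$. Direct differentiation at $t=0$ then yields
\[
\dot\xi(0)(v)=\xi_0\bigl(g_0\,u\,v\,g_0^{-1}-g_0\,v\,u\,g_0^{-1}\bigr)=\xi_0\bigl(\Ad g_0([u,v])\bigr)=\xi(0)([u,v]),
\]
which is exactly (\ref{xipr}). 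An equivalent derivation differentiates the identity $\Ad^{\ast}g(t)\cdot\xi(t)\equiv\xi_0$ via the Leibniz rule and substitutes the formula of Theorem \ref{mai}.

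For uniqueness, suppose $\sigma:G\to T^{\ast}G$ is any section with $\sigma(e)=\xi_0$ along which the Hamiltonian equations hold. Given $g\in G$, connectedness of $G$ produces a smooth path from $e$ to $g$; along this path the covector $\sigma(g(t))$ satisfies the linear ODE (\ref{xipr}) determined by $u(t)=dl_{g(t)^{-1}}(\dot g(t))$ with initial value $\xi_0$. Cauchy--Picard uniqueness forces $\sigma(g(t))=s(g(t))$ for every $t$, and hence $\sigma=s$ on all of $G$.

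No serious obstacle should arise: the main derivative calculation is essentially a restatement of Theorem \ref{mai}, and the rest is standard ODE theory. The only point demanding care is the bookkeeping between $\Ad^{\ast}g$ and its inverse $\Ad^{\ast}(g^{-1})$, together with the convention that the Hamiltonian $\xi$-equation is written in the left-trivialization of $T^{\ast}G$, so that covectors at $g$ are identified with elements of $\mathfrak{g}^{\ast}$.
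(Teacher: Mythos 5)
Your proposal is correct and follows the same route the paper intends: the paper offers no written proof beyond the remark that Theorems \ref{hlg} and \ref{mai} ``immediately imply'' the corollary, and your verification (differentiating $\xi(t)(v)=\xi_0(\Ad g(t)(v))$ via Lemma \ref{lem} to recover (\ref{xipr}), then invoking uniqueness for the linear ODE along paths in the connected group) is exactly the omitted detail. You also correctly handle the one genuinely delicate point, namely that the section involves $(\Ad^{\ast}g)^{-1}$ rather than $\Ad^{\ast}g$, which flips the sign relative to the formula stated in Theorem \ref{mai} and makes the result come out as $\dot\xi(w)=\xi([u,w])$ as required.
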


\begin{definition}
\label{coad}
The mapping $(\Ad^{\ast}(\cdot))^{-1}: G\rightarrow (\mathfrak{g}^{\ast}\rightarrow \mathfrak{g}^{\ast})$ is called the co-adjoint representation of the Lie group $G;$ 
$\ad^{\ast}(\cdot): \mathfrak{g}\rightarrow (\mathfrak{g}^{\ast}\rightarrow \mathfrak{g}^{\ast})$ is the co-adjoint representation of the Lie algebra $\mathfrak{g}.$ The image of the mapping 
$(\Ad^{\ast}(\cdot))^{-1}(\xi_0)$ is called the orbit of element $\xi_0$ relative to
the co-adjoint representation of the Lie group $G$.
\end{definition}

Every non-trivial orbit of the co-adjoint representation of the Lie group admits a 
{\it canonical symplectic structure} \cite{Kir72}. Let $O(\xi)$ be the orbit of
an element $\xi\in \mathfrak{g}^{\ast}$ relative to the co-adjoint representation of the
Lie group $G$ with Lie algebra $\mathfrak{g}.$ Then 
$T_{\xi}O(\xi)=\{\eta=\ad^{\ast}u\xi| u\in \mathfrak{g}\}.$
By definition,
\begin{equation}
\label{coads}
\sigma(\eta_1,\eta_2)=\xi([u_1,u_2]),\quad\mbox{if}\quad\eta_1=
\ad^{\ast}u_1\xi, \eta_2=\ad^{\ast}u_2\xi.
\end{equation}
It is not difficult to check that this definition does not depend on the presentation of
the elements $\eta_1,$ $\eta_2.$ Obviously, $\sigma$ is skew-symmetric. One can easily check
also that it is non-degenerate. It follows from the Jacobi identity in the Lie algebra 
$(\mathfrak{g},[\cdot,\cdot])$ that the differential form $\sigma$ is closed \cite{Kir72}.

Theorem \ref{mai} implies the invariance of the form (\ref{coads}) relative to  
$\Ad^{\ast}G.$ Theorems \ref{hlg}, \ref{mai} and Corollary \ref{adj} show that
the coincidence of right parts in formulae (\ref{hambr}) and (\ref{coads}) is
not occasional. In particular, one can define the canonical symplectic form on 
orbits of the co-adjoint representation otherwise with the help of the symplectic form 
$\sigma$ on $T^{\ast}G;$ and the exactness of the second form implies that the first
one is closed. 

\begin{remark}
\label{sof}
Theorems 2 and 3 from Lecture 7.3 in \cite{Kir02} give yet another alternative 
approach to construct the canonical symplectic structure on co-adjoint orbits, 
based on the notion of Poisson manifold. A.~Weinstein remarked that Theorem 2
was formulated by Lie approximately in 1890. A.A.~Kirillov supposed that
Lie in no way used this result. F.A.~Berezin reopened this theorem in 1968 when
he investigated universal enveloping algebras (see quotations in \cite{Kir02}). 
\end{remark}

\section{Riggings of left-invariant distributions on Lie groups}
\label{riggings}

Each left-invariant sub-Riemannian metric on the Lie group $G$ is defined by 
a left-invariant completely nonholonomic vector distribution $D$ and left-invariant
scalar product $(\cdot,\cdot)$ on $D$. The Solov'ev method, for a given rigging 
$D^{\perp}$ of distribution $D,$ gives the unique curvatures of metrized distribution 
$(D,(\cdot,\cdot)).$

Let $D$ be a left-invariant completely nonholonomic distribution of dimension
$m\geq 2$ and codimension $k\geq 1$ on the Lie group $G^n$. Then there are 
left-invariant differential 1-forms $\omega_{m+1},\dots, \omega_n$ such that 
$D=\{v\in TG|\omega_j(v)=0,j=m+1,\dots, n\}.$ It is clear that the forms $\omega_j$ 
are not defined uniquely by distribution $D,$ but they constitute a basis 
over $\mathbb{R}$ of the unique vector space $\Nul(D)$ of 
all left-invariant 1-forms on $G$, which annihilate the distribution $D$.
Let us fix such forms $\omega_{m+1},\dots, \omega_n$ and some basis $X_1,\dots,X_m$
of left-invariant vector fields on $G$ tangent to $D$. 

Similarly, we can define any rigging $D^{\perp}$ of the distribution $D$ if
we choose some left-invariant differential 1-forms $\omega_{1},\dots, \omega_m$ 
on $G,$ which are linearly independent with $\omega_{m+1},\dots, \omega_n,$ and set  
$D^{\perp}=\{w\in TG|\omega_i(w)=0,i=1,\dots, m\}.$ 

The considerations above, especially Relations (\ref{neccont}) and (\ref{hambr}), prompt 
three possible cases of naturally assigning left-invariant rigging $D^{\perp}$ of 
left-invariant distribution $D$ on the Lie group $G$:

There exist 1-forms $\omega_{1},\dots, \omega_m$ on $G,$ linearly independent 
with $\omega_{m+1},\dots,  \omega_n,$ with $\mathbb{R}$-linear span depending
only on $D,$ satisfying one of the following three conditions: 

1) If $\omega_1(W)=\dots = \omega_m(W)=0$ for a left-invariant vector field $W$ on 
$G,$ then $\omega_i[W,X_j]=0$ for all $j=1,\dots, m$ and $i=m+1,\dots, n.$

2) If $\omega_1(W)=\dots = \omega_m(W)=0$ for a left-invariant vector field $W$ 
on $G,$ then $\omega_i[W,X_j]=0$ for all $i,j=1,\dots, m.$

3) $\omega_i[W,X]=0$ for all left-invariant vector fields $W$ and $X$ 
on $G$ and $i=1,\dots, m.$

1) The Jacobi identity implies that the set of left-invariant vector fields 
$X$ on $G$ such that $[X,D]\subset D,$ is a Lie algebra. Therefore
the corresponding $D^{\perp}(e)$ is a Lie subalgebra in $\mathfrak{g}.$ 

2) Since  $D$ is a completely nonholonomic distribution, the Jacobi identity 
implies that the corresponding $D^{\perp}(e)$ is an ideal in $\mathfrak{g}.$ 

Clearly, 3) is a partial case of 2). {\it There are analogues of conditions
1), 2), 3) for homogeneous manifolds $G/H$}. It is necessary to note 
that conditions 1)--3) are very general in two senses: they do not take into 
account a particular structure of homogeneous manifolds $G/H$ with invariant
completely nonholonomic distribution $D$ as well as a sub-Riemannian metric 
connected with them.  Maybe it would be possible to find other 
natural conditions to choose a rigging of $D$ in partial cases of $G/H$ and 
invariant sub-Riemannian metrics on $G/H$. For example, one could use 
some Killing vector fields on homogeneous sub-Riemannian manifolds.

\section{Examples}

By Theorem \ref{rigg}, any contact distribution has a canonical rigging, so
in this case we can apply the Solov'ev definition of curvatures. 

We shall show that any completely nonholonomic left-invariant rank two 
distribution on any three-dimensional Lie group $G$ is contact, so we 
can apply Theorem \ref{grlie} to calculate the sectional curvature for
any left-invariant sub-Riemannian metric on $G$. This curvature 
coincides with Ricci and Gaussian curvatures.

\begin{proposition}
\label{contt}
A three-dimensional Lie group $G$ admits a left-invariant contact form 
$\omega$ with a contact distribution $D$: $\omega(D)=0$ if and only if there 
exists a completely nonholonomic left-invariant rank two distribution 
$D$ on $G$ satisfying condition 1) from Section \ref{riggings}; moreover,
there exists a non-zero left-invariant 1-form $\omega$ on $G$ such that $\omega(D)=0$.
\end{proposition}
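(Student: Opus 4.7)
The plan is to treat the two directions of the equivalence separately, using the fact that on a three-dimensional Lie group a left-invariant distribution of codimension one is always the kernel of a one-dimensional space of left-invariant 1-forms. The ``moreover'' clause is an essentially trivial byproduct of this observation.

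For the ``only if'' direction, suppose $\omega$ is a left-invariant contact form with $D = \{v : \omega(v) = 0\}$. Since $\omega$ is left-invariant and nowhere zero, $D$ is a left-invariant rank-two distribution on $G^3$, and by Theorem \ref{rigg} it is completely nonholonomic. To produce a rigging satisfying condition 1) of Section \ref{riggings}, I would first observe that the Reeb vector field $W$ is left-invariant: because $\omega$ and hence $d\omega$ are left-invariant, the pointwise conditions $\omega(W) = 1$ and $d\omega(W,\cdot) = 0$ have a unique solution that is permuted by left-translation. Then by Theorem \ref{invcd} (or directly from $-\omega([W,X]) = d\omega(W,X) = 0$ for $X \in D$) one has $[W,X] \in D$ for every left-invariant $X \in D$. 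Taking $D^{\perp} = \mathbb{R} \cdot W$ and choosing left-invariant forms $\omega_1, \omega_2$ with $\omega_1(W) = \omega_2(W) = 0$ and $\omega_3 = \omega$, this is exactly condition 1) in the case $m = 2$, $n = 3$.

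For the ``if'' direction, let $D$ be any completely nonholonomic left-invariant rank-two distribution on $G$. Since $D$ has codimension one and is left-invariant, the space of left-invariant 1-forms annihilating $D$ is one-dimensional; any non-zero $\omega$ in it establishes the ``moreover'' clause. To verify $\omega$ is contact it suffices, by left-invariance, to show $\omega \wedge d\omega \ne 0$ at $e$. Choose left-invariant $X, Y \in D$ with $[X,Y] \notin D$ (possible by complete nonholonomy) and any left-invariant $Z \notin D$. Then
\[
d\omega(X,Y) = X\omega(Y) - Y\omega(X) - \omega([X,Y]) = -\omega([X,Y]) \ne 0,
\]
and $(\omega \wedge d\omega)(X,Y,Z) = \omega(Z) \cdot d\omega(X,Y) \ne 0$.

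The only delicate step I anticipate is confirming left-invariance of the Reeb field and matching the relation $[W,D] \subset D$ with condition 1) as formulated via the forms $\omega_i$; both are essentially bookkeeping. It is worth noting that condition 1) is never actually invoked in the ``if'' direction---on a three-dimensional Lie group every left-invariant completely nonholonomic rank-two distribution automatically gives a contact form via any annihilating 1-form. The real content of the proposition is therefore that in the contact case the natural Reeb rigging automatically verifies the invariance condition 1), making the Solov'ev curvature machinery directly applicable.
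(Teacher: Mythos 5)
Your proof is correct and follows essentially the same route as the paper's: necessity via the (left-invariant) Reeb vector field rigging, i.e.\ Theorems \ref{rigg} and \ref{invcd}, and sufficiency via the computation $d\omega(X,Y)=-\omega([X,Y])\neq 0$ for a left-invariant annihilating $1$-form $\omega$. Your side remark that condition 1) is not actually needed in the ``if'' direction is also correct and consistent with Proposition \ref{everyct}(3); the paper invokes condition 1) there mainly to identify $W$ as the Reeb field of $\omega$.
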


\begin{proof}
Necessity follows from Theorems \ref{rigg} and \ref{invcd}.

Sufficiency. Suppose that a left-invariant completely nonholonomic rank two 
distribution $D$ on $G$ together with a unique left-invariant distribution
$D^{\perp}$ satisfy condition 1) from Section \ref{riggings}, i.e. 
$[D^{\perp}(e),D(e)]\subset D(e).$ Then there exists a non-zero left-invariant 
1-form $\omega$ on $G$ which is unique up to multiplication by a constant such
that $\omega(D)=0$ and a unique left-invariant vector field on
$G$ tangent to $D^{\perp}$ such that $\omega(W)=1.$ Hence for any linearly
independent left-invariant vector fields $X,$ $Y$ on $G$, tangent to $D$,
similarly to the proofs of Theorems \ref{rigg} and \ref{invcd}, we get  
$$d\omega(X,Y)=-\omega([X,Y])\neq 0,$$
$$d\omega(W,X)=-\omega(W,X)=0,\quad d\omega(W,Y)=-\omega(W,Y)=0.$$
This means that $\omega$ is a contact form on $G$ and $W$ is the
Reeb vector field for $\omega$.
\end{proof}

\begin{proposition}
\label{everyct}
1) There is no left-invariant completely nonholonomic rank two distribution on
a three-dimensional Lie group $G$ if and only if $G$ is commutative or its
Lie algebra $\mathfrak{g}$ admits a basis $e_1,e_2,e_3$: 
$[e_1,e_2]=e_2$, $[e_1,e_3]=e_3$, $[e_2,e_3]=0$. 

2) There are four types of mutually non-isomorphic connected commutative Lie 
groups, and they are unimodular. There exists only one connected Lie group with the 
Lie algebra of second form; it is simply connected, solvable, non-unimodular and 
characterized by the property that, supplied by an arbitrary left-invariant 
Riemannian metric, it is isometric to the Lobachevsky space. 

3) Every left-invariant completely nonholonomic rank two distribution on any 
three-dimensional Lie group is contact. 
\end{proposition}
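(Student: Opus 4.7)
The plan is to reduce each part to a Lie-algebraic condition on $\mathfrak{g}=T_eG$. A left-invariant rank-two distribution is determined by a $2$-dimensional subspace $D(e)\subset\mathfrak{g}$, and in dimension $3$ it is completely nonholonomic exactly when $[D(e),D(e)]\not\subset D(e)$. Thus for part~1) the hypothesis ``no such $D$ exists'' translates into ``every $2$-dimensional subspace of $\mathfrak{g}$ is a subalgebra''. I would split on $k:=\dim[\mathfrak{g},\mathfrak{g}]$. The cases $k=0$ (abelian) and $k=3$ (simple, i.e.\ $\mathfrak{sl}(2,\mathbb{R})$ or $\mathfrak{so}(3)$) are immediate, the latter because any two non-commuting elements span a $2$-plane whose bracket is linearly independent from them. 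For $k=1$ (Heisenberg, or $\mathfrak{aff}(1)\oplus\mathbb{R}$) a direct inspection exhibits a $2$-subspace failing to be a subalgebra. The delicate case is $k=2$: putting $D:=[\mathfrak{g},\mathfrak{g}]$, for any $v\notin D$ and $z\in D$ the fact that $\mathrm{span}(v,z)$ is a subalgebra forces $[v,z]\in\mathbb{R}z$; varying $z$ shows $\ad_v|_D=\lambda(v)\cdot\mathrm{id}_D$; then considering subspaces of the form $\mathrm{span}(v+z_2,z_1)$ for a basis $z_1,z_2$ of $D$ forces $D$ to be abelian. Choosing $e_1\notin D$ with $\lambda(e_1)=1$ (which exists unless $\mathfrak{g}$ is abelian) and any basis $e_2,e_3$ of $D$ then yields precisely the stated brackets.

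For part~2), the four non-isomorphic connected abelian Lie groups of dimension $3$ are $\mathbb{R}^3$, $\mathbb{R}^2\times S^1$, $\mathbb{R}\times T^2$, $T^3$, and they are unimodular since the Haar measure on any abelian group is bi-invariant. For the exceptional Lie algebra I would realise the simply connected cover concretely as $\mathbb{R}^2\rtimes\mathbb{R}$ with $t\in\mathbb{R}$ acting on $\mathbb{R}^2$ by multiplication by $e^t$: this is diffeomorphic to $\mathbb{R}^3$, and a direct computation shows that its centre is trivial, so it is the only connected Lie group with that Lie algebra. Solvability is clear from $[\mathfrak{g},\mathfrak{g}]=\mathrm{span}(e_2,e_3)$ being abelian, and non-unimodularity from $\mathrm{tr}\,\ad_{e_1}=2\neq0$. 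For the isometry with hyperbolic (Lobachevsky) space, given an arbitrary left-invariant metric I would choose $f_2,f_3$ an orthonormal basis of the canonical $2$-ideal $[\mathfrak{g},\mathfrak{g}]$ and $f_1$ the unit vector in the orthogonal complement satisfying $\ad_{f_1}|_{[\mathfrak{g},\mathfrak{g}]}=\lambda\,\mathrm{id}$ for some $\lambda>0$; the Koszul formula then gives $\nabla_{f_i}f_1=-\lambda f_i$ and $\nabla_{f_i}f_i=\lambda f_1$ for $i=2,3$, with the remaining basic covariant derivatives vanishing, and all sectional curvatures come out equal to $-\lambda^2$.

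For part~3), let $X_1,X_2$ be a basis of a completely nonholonomic $D(e)$ and set $X_3:=[X_1,X_2]\notin D(e)$, so $\{X_1,X_2,X_3\}$ is a basis of $\mathfrak{g}$. Writing $[X_3,X_1]=\alpha X_1+\beta X_2+\gamma X_3$ and $[X_3,X_2]=\alpha'X_1+\beta'X_2+\gamma'X_3$, I would look for $W=aX_1+bX_2+cX_3$ with $c\neq0$ and $[W,X_j]\in D(e)$ for $j=1,2$; inspecting the $X_3$-components reduces this to the linear system $b=c\gamma$, $a=-c\gamma'$, which is always solvable. Then $\mathbb{R}W$ is a left-invariant rigging of $D$ with $[\mathbb{R}W,D(e)]\subset D(e)$, i.e.\ condition~1) of Section~\ref{riggings} is satisfied, and Proposition~\ref{contt} finishes.

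The main obstacle I anticipate is the bookkeeping in the $k=2$ subcase of part~1), where one must play several $2$-dimensional subspaces against one another to extract simultaneously the abelianness of $[\mathfrak{g},\mathfrak{g}]$ and the scalar form of $\ad_v|_D$; the remaining items in parts~2) and~3) are then direct computations whose outcome is dictated once the correct normalised frame or ansatz for $W$ is chosen.
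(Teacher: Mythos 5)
Your proposal is correct in substance but follows a genuinely different route from the paper. The paper does not argue from first principles: for parts 1) and 3) it invokes Milnor's classification of three-dimensional Lie algebras (formula (4.2), the table on p.~307 and Lemma~4.10 of \cite{Miln}) to list all Lie brackets in a Milnor basis, checks condition 1) of Section~\ref{riggings} case by case, and then uses the Agrachev--Barilari remark that all bracket-generating left-invariant distributions on a given $3$-dimensional group are equivalent under an automorphism except for the elliptic/hyperbolic dichotomy on $\mathfrak{sl}(2)$, which it treats separately; part 2) is likewise delegated to \cite{Miln}. You instead reduce everything to the single algebraic dichotomy ``every $2$-plane of $\mathfrak{g}$ is a subalgebra or not'' and run a self-contained case split on $\dim[\mathfrak{g},\mathfrak{g}]$; your $k=2$ analysis (scalarity of $\ad_v|_{[\mathfrak{g},\mathfrak{g}]}$ plus abelianness of the derived algebra extracted from the tilted planes $\operatorname{span}(v+z_2,z_1)$) recovers the normal form without any classification, and your part 3) produces the normalizing vector $W=-\gamma'X_1+\gamma X_2+X_3$ explicitly for an arbitrary bracket-generating plane, which makes the appeal to the automorphism-equivalence remark unnecessary. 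What the paper's route buys is brevity and automatic consistency with the standard taxonomy of $3$-dimensional groups; what yours buys is independence from external tables and a uniform argument. One local error to fix: in the $k=3$ case it is false that \emph{any} two non-commuting elements span a plane whose bracket leaves it --- in $\mathfrak{sl}(2,\mathbb{R})$ the Borel plane $\operatorname{span}(h,e)$ with $[h,e]=2e$ is a subalgebra generated by non-commuting elements. The conclusion you need (existence of \emph{some} non-subalgebra $2$-plane in a simple $\mathfrak{g}$) is still immediate, e.g.\ from $\operatorname{span}(e,f)$ with $[e,f]=h$, so this is a misstated justification rather than a gap, but the sentence should be corrected.
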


\begin{proof}
1) The sufficiency in the first statement is clear. The necessity follows 
from formula (4.2), table on p. 307, and Lemma 4.10 in the paper \cite{Miln} 
by Milnor. There are given the Lie brackets for special {\it Milnor bases} 
$e_1,e_2,e_3$ in the Lie algebras and a full classification respectively of 
unimodular and non-unimodular Lie algebras. There are six types of unimodular 
Lie algebras and a continuous connected one-parameter family of non-unimodular 
Lie algebras.

2) The statement about commutative groups is trivial; concerning another
statement, see \cite{Miln}.

3) The same formula (4.2), table on p. 307, and Lemma 4.10 in \cite{Miln},
together with Proposition \ref{contt}, imply that 
for any other three-dimensional Lie group $G$, the left-invariant distribution 
$D$ on $G$ with basis $e_1, e_2$ for $D(e)$ is completely nonholonomic and contact 
with respect to a left-invariant contact 1-form $\omega$ on $G$ with the 
left-invariant Reeb vector field $W$ such that $W(e)=e_3.$  
In Lemma 4.10, one needs to take $\alpha=2,$ $\delta=0,$ $\beta \neq 0.$

The proof is completed by a remark from the paper \cite{AgrBar}
by A.~Agrachev and D.~Barilari. It states that in each of the cases under 
consideration but one, all left-invariant bracket generating distributions 
are equivalent by an automorphism of the Lie algebra. The excluded cases 
are Lie groups $G$ with Lie algebra $\mathfrak{sl}(2).$ Besides the 
one considered above, the so-called {\it elliptic} distribution $D$ for $G,$ 
there is a non-equivalent to it, the so-called {\it hyperbolic} distribution 
$D_h$ for $G$ such that the restriction of the Killing form onto $D_h(e)$ 
is sign-indefinite. We can take for $D_h(e)$ the basis $e_2, e_3$. Then
formula (4.2), table on p. 307 in \cite{Miln}, and 
Proposition \ref{contt} imply that $D_h$ is hyperbolic, bracket generating,
and contact with respect to a left-invariant contact 1-form $\omega$ on 
$G$ with left-invariant Reeb vector field $W$ such that $W(e)=e_1.$   
\end{proof}

The Reeb vector field $W$ could generate a local one-parameter subgroup of
isometries for $(G,d)$ if and only if $G$ is locally isomorphic to the Heisenberg 
group $\mathbb{H}^1,$ $SO(3)$ or $SL(2).$ In the last case the corresponding 
distribution $D$ must be elliptic. Also $W$ will be tangent to a closed 
one-dimensional subgroup $H\subset G$ acting on the right by isometries 
in $(G,d).$ Then $G/H$ admits an invariant Riemannian metric $\delta$ such 
that the canonical projection $p:(G,d)\rightarrow (G/H,\delta)$ is a submetry. 
Therefore by Theorem \ref{subm} the Gaussian curvature of $(G,d)$ is equal 
to the constant Gaussian curvature of $(G/H,\delta)$. These groups
with such metric $d$ were studied in \cite{BZ01} --- \cite{BZ17}. There the 
corresponding Gaussian curvatures were equal respectively to $0$, $1$, $-1$ 
what agrees with statements in \cite{BaudGar}. 

Notice that any two sub-Riemannian metrics on $\mathbb{H}^1$ give isometric spaces.
The corresponding distribution $D$ also satisfies both conditions 2) and 
3) from section \ref{riggings}.  

\begin{proposition}
\label{three}
Assume that a left-invariant sub-Riemannian metric $d$ on a Lie group $G$ is defined by 
the scalar product $\langle\cdot,\cdot\rangle$ on distribution $D$ with the 
rigging $D^{\perp}$ satisfying condition 3). Then all curvatures of $(G,d)$ are
equal to zero. 
\end{proposition}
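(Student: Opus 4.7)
The plan is to extend the scalar product $\langle\cdot,\cdot\rangle$ on $D$ to a left-invariant Riemannian metric $(\cdot,\cdot)$ on all of $G$ — any extension will do, since the Solov'ev connection $\nabla$ on $D$ depends only on $\langle\cdot,\cdot\rangle$ and on the rigging $D^{\perp}$, not on the scalar product chosen on $D^{\perp}$. Once a compatible left-invariant orthonormal basis $e_1,\dots,e_n$ with $e_1,\dots,e_m$ spanning $D$ is fixed, I would evaluate the formula of Theorem \ref{grlie} term by term and show that every summand vanishes.

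First I would unpack condition 3). The left-invariant 1-forms $\omega_1,\dots,\omega_m$ that define $D^{\perp}$ span the annihilator of $D^{\perp}$, so the identity $\omega_i[W,X]=0$ for all $i\in\{1,\dots,m\}$ and all left-invariant $W,X$ on $G$ is equivalent to the purely algebraic inclusion
\[
[\mathfrak{g},\mathfrak{g}] \subset D^{\perp}(e).
\]
Writing $c_{ijk}=([e_i,e_j],e_k)$ as in Theorem \ref{grlie}, this inclusion together with $D\perp D^{\perp}$ yields at once
\[
c_{ijk}=0 \qquad \text{for all } i,j\in\{1,\dots,n\} \text{ and all } k\in\{1,\dots,m\}.
\]
Note that structure constants $c_{abi}$ with $i>m$ may (and in fact do) remain nonzero — they encode the complete nonholonomy of $D$; it is only indices in the third (``output'') slot lying in $\{1,\dots,m\}$ that are killed.

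Next, applying Theorem \ref{grlie} to any pair $a\neq b$ in $\{1,\dots,m\}$, I would scan the two sums. In $\tfrac12\sum_{i=1}^{n}c_{abi}(c_{bia}+c_{iab})$, the third indices of $c_{bia}$ and $c_{iab}$ are $a$ and $b$ respectively, both at most $m$, so by the vanishing above each of these factors is zero for every $i$ and the entire sum is zero regardless of the possibly nonzero values of $c_{abi}$. In the second sum $\sum_{j=1}^{m}$, every appearing structure constant, namely $c_{jab}$, $c_{jba}$, $c_{abj}$, $c_{jaa}$, $c_{jbb}$, carries an index from $\{1,\dots,m\}$ in its third slot, so each one vanishes. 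Therefore $K_{e_ae_b}=0$ for every pair of orthonormal basis vectors of $D(e)$, and by bilinearity for every two-plane in $D(e)$.

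Finally, the Ricci curvature $k_w$ in a direction $w\in D(e)$ is obtained by summing sectional curvatures $K_{e_a, w/\|w\|}$ over $a$, and the scalar curvature is a further trace, so both vanish at $e$ and, by left-invariance, everywhere on $G$. The argument reduces essentially to index bookkeeping, so I do not expect a genuine obstacle; the single point that deserves care is the remark that the output of Theorem \ref{grlie} is insensitive to the inner product extension to $D^{\perp}$, which follows from the corresponding independence of $\nabla$ noted just after formula (\ref{nabla}).
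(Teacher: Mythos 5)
Your proposal is correct and follows the paper's own (much terser) argument exactly: the paper likewise extends $\langle\cdot,\cdot\rangle$ to a left-invariant metric with $D\perp D^{\perp}$, notes that condition 3) forces $c_{ijk}=0$ for all $i,j\in\{1,\dots,n\}$ and all $k\in\{1,\dots,m\}$, and concludes via Theorem \ref{grlie}; you merely make explicit the equivalence with $[\mathfrak{g},\mathfrak{g}]\subset D^{\perp}(e)$ and the index bookkeeping the paper leaves implicit. The only cosmetic point is that passing from basis pairs to arbitrary two-planes in $D(e)$ is better justified by the freedom to adapt the orthonormal basis to the given plane (the hypothesis $c_{ijk}=0$ for $k\le m$ being basis-independent) than by ``bilinearity'' of the sectional curvature.
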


\begin{proof}
Let $(\cdot,\cdot)$ be a left-invariant Riemannian metric on $G$ such that 
 $(\cdot,\cdot)|_D=\langle\cdot,\cdot\rangle,$ 
$(D,D^{\perp})=0,$ $\{e_1,\dots, e_m\},$ $\{e_{m+1},\dots, e_n\}$ an orthonormal bases in
$D$ and $D^{\perp}.$  Then $c_{ijk}=0$ for all $i,j=1,\dots, n$, $k=1,\dots, m$ in the
notation of Theorem \ref{grlie}, which implies Proposition \ref{three}.
\end{proof}

The group $\mathbb{H}^1$ is a partial and the simplest case of the so-called {\it Carnot groups}.

\begin{definition}
\label{carn}
The Carnot group is a Lie group $G,$ supplied by a 
1-parameter multiplicative group of automorphisms $(\delta_s,\cdot)$, $s>0,$ such
that the vector subspace $V:=\{v\in \mathfrak{g}: d\delta_s(v)=sv\}$ generates 
$\mathfrak{g},$ i.e. the least Lie subalgebra in $\mathfrak{g},$ containing
$V,$ coincides with $\mathfrak{g}.$ The expression ``the Carnot group with
a left-invariant sub-Riemannian metric'' means that $D(e)=V.$ 
\end{definition}

\begin{corollary}
\label{carnot}
Any Carnot group $G$ with a left-invariant (sub-)Riemannian metric $d$ defined by
left-invariant distribution $D$ and scalar product $\langle\cdot,\cdot\rangle$ on $D$
(with mentioned rigging $D^{\perp}$ of distribution $D$ if
$G$ is non-commutative) has zero curvatures.  
\end{corollary}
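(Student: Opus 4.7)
The plan is to use the graded structure of a Carnot algebra to verify that condition 3) of Section \ref{riggings} holds for the natural rigging, and then invoke Proposition \ref{three} directly.

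First I would extract the stratification from the dilation hypothesis. Since each $\delta_s$ is an automorphism of $G$, its differential $d\delta_s$ is a Lie algebra automorphism, and therefore $[V_i,V_j]\subset V_{i+j}$ for the eigenspaces $V_k:=\{v\in\mathfrak{g}\mid d\delta_s(v)=s^k v\}$ (indeed $d\delta_s[u,v]=[d\delta_s u, d\delta_s v]=s^{i+j}[u,v]$ for $u\in V_i$, $v\in V_j$). The hypothesis that $V=V_1$ generates $\mathfrak{g}$ then forces a finite direct sum decomposition $\mathfrak{g}=V_1\oplus V_2\oplus\cdots\oplus V_r$ with $V_1=D(e)$.

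Next I would take $D^{\perp}$ to be the left-invariant distribution with $D^{\perp}(e)=V_2\oplus\cdots\oplus V_r$. Because every bracket lies in some $V_{i+j}$ with $i+j\geq 2$, one has $[\mathfrak{g},\mathfrak{g}]\subset D^{\perp}(e)$; equivalently, for the annihilator forms $\omega_1,\dots,\omega_m$ of $D^{\perp}$ the identity $\omega_i([W,X])=0$ holds for every index $i$ and every pair of left-invariant vector fields $W,X$ on $G$. This is precisely condition 3) of Section \ref{riggings}, so Proposition \ref{three} applies and yields the vanishing of all Solov'ev curvatures of $(G,d)$.

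The commutative case is separate but trivial: the generation hypothesis degenerates to $V_1=\mathfrak{g}$, so $D=TG$, the metric is a flat left-invariant Riemannian metric on an abelian Lie group, and all Milnor structure constants $c_{ijk}$ appearing in Theorem \ref{grlie} vanish. I do not foresee any genuine obstacle: the only step with content is identifying the grading complement of the first layer as a rigging satisfying 3), and the graded inclusion $[V_i,V_j]\subset V_{i+j}$ makes this essentially tautological.
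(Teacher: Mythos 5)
Your proposal is correct and follows essentially the same route as the paper: identify the grading $\mathfrak{g}=\oplus_k\mathfrak{g}_k$ with $D(e)=\mathfrak{g}_1$, take $D^{\perp}(e)=\oplus_{k\geq 2}\mathfrak{g}_k$, observe that $[\mathfrak{g},\mathfrak{g}]\subset D^{\perp}(e)$ gives condition 3), and apply Proposition \ref{three}, with the commutative case handled separately. The only difference is that you derive the stratification from the dilation automorphisms explicitly, whereas the paper simply asserts the graded nilpotent structure; this is a welcome but inessential amplification.
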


\begin{proof}
Obviously, the statement is true if $G$ is a commutative Lie group because then $d$ is 
a Riemannian metric and $(G,d)$ is locally isometric to an Euclidean space. Otherwise,
the Lie algebra of the Lie group $G$ is a graded nilpotent Lie algebra 
$\mathfrak{g}=\oplus_{k=1}^l \mathfrak{g}_k$ generated by $\mathfrak{g}_1,$ 
where $l\geq 2,$ $D(e)=\mathfrak{g}_1.$  
It is clear that $G$ satisfies condition 3) from Section \ref{riggings} for 
$D^{\perp}(e)=\oplus_{k=2}^l \mathfrak{g}_k$, so we can apply Proposition \ref{three}. 
\end{proof}

\begin{remark}
Corollary \ref{carnot} is obvious for any Carnot group $G$ with a left-invariant 
(sub-)Riemannian metric $d$ because the members of one-parameter 
multiplicative group $(\delta_s,\cdot)$, $s>0,$ from Definition \ref{carn} are $s$-similarities 
of $(G,d).$ Calculations in the above proof of Proposition \ref{three} demonstrate the 
correctness of adopted method for Carnot groups. The statement of Corollary \ref{carnot} is given in 
\cite{BaudGar} only for $G$ of step two.    
\end{remark}

There are the following Hopf bundles  
$$S^{2n+1}=U(n+1)/U(n)\rightarrow U(n+1)/(U(n)\times U(1))=\mathbb{C}P^n, \quad n\geq 1,$$
$$S^{4n+3}= Sp(n+1)/Sp(n)\rightarrow Sp(n+1)/(Sp(n)\times Sp(1))=\mathbb{H}P^n, \quad n\geq 1,$$
$$S^{4n+3}= Sp(n+1)/Sp(n)\rightarrow Sp(n+1)/(Sp(n)\times U(1))=\mathbb{C}P^{2n+1}, \quad n\geq 1,$$
$$S^{15}=Spin(9)/Spin(7)\rightarrow Spin(9)/Spin(8)=\mathbb{C}aP^1=S^8,$$
$$\mathbb{C}P^{2n+1}=Sp(n+1)/(Sp(n)\times U(1))\rightarrow Sp(n+1)/(Sp(n)\times Sp(1))=\mathbb{H}P^n, \quad n\geq 1.$$
The fibres of these bundles are spheres of respective dimensions 
$1,$ $3,$ $1,$ $7,$ and $2.$ 

If we supply all the total spaces (spheres) of the bundles of the first four types 
by the canonical Riemannian metrics of sectional curvature 1, then there are 
unique canonical Riemannian symmetric metrics on the bases of these bundles 
such that the corresponding projections are Riemannian submersions. After that 
there are unique canonical symmetric Riemannian metrics on the bases of the 
bundles of the last type such that the corresponding projections are Riemannian 
submersions. 

Many details on these Riemannian submersions can be found in papers
\cite{Zil82}, \cite{BerNik14}. The next to the last case is the most difficult,
but at the same time the most interesting case, which involves essentially
the Clifford algebras $Cl^n$ and the Cayley algebra
$\mathbb{C}a$ of octonions. The image of the Lie algebra 
$\mathfrak{spin}(7)=\mathfrak{so}(7)$ of the Lie subgroup $Spin(7)$ is not the standard 
inclusion into $\mathfrak{spin}(9)=\mathfrak{so}(9)$, but its image 
$\tau(\mathfrak{so}(7))$ under an outer automorphism $\tau$ of Lie algebra
$\mathfrak{so}(8),$ with standard inclusion $\mathfrak{so}(8)\subset \mathfrak{so}(9),$ 
the so-called {\it triality automorphism} of order 3. In reality 
$\tau$ is induced by a rotation symmetry $s\in S_3$ of the Dynkin diagram
$D_4$ (which is a tripod) of the Lie algebra $\mathfrak{so}(8).$   

Then the horizontal
distributions $D$ of all these Riemannian submersions are completely
nonholonomic in the total spaces of these bundles. We shall get homogeneous
sub-Riemannian metrics on the total spaces with distributions $D$ if we supply
$D$ by the induced scalar products from the previous Riemannian metrics. After this
procedure, not changing the previous symmetric Riemannian metrics on the bases 
of the bundles, we get submetries from the sub-Riemannian manifolds onto the Riemannian
symmetric spaces. In all cases analogues of condition 1) from 
Section \ref{riggings} for horizontal and vertical distributions are satisfied. Therefore,
by Theorem \ref{subm}, we can calculate all curvatures of the total homogeneous 
sub-Riemannian manifolds, using the curvatures of the bases with symmetric 
Riemathennian metrics. In the first three cases there are respective groups 
$U(1),$ $Sp(1),$ and $U(1)$ of {\it transverse symmetries} studied in \cite{BaudGar}. 
In the other cases this is impossible because the spheres $S^7$ and $S^2$  
admit no structure of a Lie group.

Now we shall consider the Liu-Sussman example from Section 9.5 in \cite{LS}. 
Let $G$ be any four-dimensional Lie group whose Lie algebra $\mathfrak{g}$ has two 
generators $f$ and $g$ such that (1) $f,$ $g,$ $[f,g]$ and $[f,[f,g]]$ form a basis 
in $\mathfrak{g};$ (2) $[g,[f,g]]$ belongs to the linear span of vectors $f,$ $g,$ 
and $[f,g];$ (3) $[g,[f,g]]$ does not belong to the linear span of vectors $f$ and 
$[f,g].$ One can take $G=SO(3)\times \mathbb{R}$ with Lie algebra 
$\mathfrak{so}(3)\oplus \mathbb{R}$ and  
\begin{equation}
\label{set}
f=k_1\oplus 1,\quad g=(k_1+k_2)\oplus 2,
\end{equation} 
where $k_1,$ $k_2,$ $k_3$ are generators of the Lie algebra $\mathfrak{so}(3)$ of 
the Lie group $SO(3)$ such that $[k_1,k_2]=k_3,$ $[k_2,k_3]=k_1,$ and $[k_3,k_1]=k_2$. 
One can easily check that 
\begin{equation}
\label{libr}
[f,g]=k_3\oplus 0,\quad [f,[f,g]]=-k_2\oplus 0,\quad [g,[f,g]]=(k_1-k_2)\oplus 0 = 2f-g.
\end{equation}
Therefore all conditions (1),(2),(3) are satisfied. 
The left-invariant sub-Riemannian metric $d$ on $G$ is defined by orthonormal basis 
$\{f,g\}$  on the vector subspace $D(e)\subset \mathfrak{g}.$ By Theorems 5 and 6 
in \cite{LS}, the subgroups $g_1(t)=\exp(tg),$ $g_2(t)=\exp(-tg)$ and their left 
shifts are only strictly abnormal geodesics in $(G,d).$ 

One can easily see from Relations (\ref{set}) and (\ref{libr}) that 
{\it the distribution $D$ does not satisfy any condition 1), 2), or 3)} from 
Section \ref{riggings}.

A simplest case is when $D^{\perp}(e)$ has orthonormal basis $k_3\oplus 0,$ $0\oplus 1$. 
Then by (\ref{set}), (\ref{libr}) and the notation of Theorem \ref{grlie} the only non-zero 
constants are $c_{231}=-c_{321}=2,$
$$c_{123}=-c_{213}=c_{131}=-c_{311}=-c_{132}=c_{312}=-c_{134}=c_{314}=-c_{232}=c_{322}=1.$$
By Theorem \ref{grlie}, all curvatures of $(G,d)$ with this $D^{\perp}(e)$ are equal to $K_{fg}=3/2.$

If we change $0\oplus 1$ by $[f,[f,g]]=-k_2\oplus 0,$ then the only non-zero 
constants are
$$c_{231}=-c_{321}=c_{341}=-c_{431}=2,$$
$$c_{123}=-c_{213}=c_{134}=-c_{314}=-c_{232}=c_{232}=-c_{143}=$$
$$c_{413}=-c_{243}=c_{423}=-c_{342}=c_{432}=-c_{344}=c_{434}=1.$$
By Theorem \ref{grlie}, all curvatures of $(G,d)$ with this $D^{\perp}(e)$ are equal to 
$K_{fg}=1.$

\end{document}